\date{\today}
\newtheorem{theorem}{Theorem}
\newtheorem{proposition}[theorem]{Proposition}
\newtheorem{corollary}[theorem]{Corollary}
\newtheorem{lemma}[theorem]{Lemma}
\theoremstyle{definition}
\newtheorem{example}[theorem]{Example}
\newtheorem{remark}[theorem]{Remark}
\begin{document}

\title[On feebly compact semitopological symmetric inverse semigroups of ...]{On feebly compact semitopological symmetric inverse semigroups of a bounded finite rank}

\author{Oleg~Gutik}
\address{Faculty of Mathematics, National University of Lviv,
Universytetska 1, Lviv, 79000, Ukraine}
\email{o\underline{\hskip5pt}\,gutik@lnu.edu.ua,
ovgutik@yahoo.com}

\keywords{Semigroup, inverse semigroup, semitopological semigroup, compact, countably compact, countably pracompact, feebly compact, H-closed, infra H-closed, $X$-compact, semiregular space}

\subjclass[2010]{Primary 22A15, 54D45, 54H10; Secondary 54A10, 54D30, 54D40.}

\begin{abstract}
We study feebly compact shift-continuous $T_1$-topologies on the symmetric inverse semigroup $\mathscr{I}_\lambda^n$ of finite transformations of the rank $\leqslant n$. For any positive integer $n\geqslant2$ and any infinite cardinal $\lambda$ a Hausdorff countably pracompact non-compact shift-continuous topology on $\mathscr{I}_\lambda^n$ is constructed. We show that for an arbitrary positive integer $n$ and an arbitrary infinite cardinal $\lambda$ for a $T_1$-topology $\tau$ on $\mathscr{I}_\lambda^n$ the following conditions are equivalent: $(i)$ $\tau$ is countably pracompact; $(ii)$ $\tau$ is feebly compact; $(iii)$ $\tau$ is $d$-feebly compact; $(iv)$ $\left(\mathscr{I}_\lambda^n,\tau\right)$ is H-closed; $(v)$ $\left(\mathscr{I}_\lambda^n,\tau\right)$ is $\mathbb{N}_{\mathfrak{d}}$-compact for the discrete countable space $\mathbb{N}_{\mathfrak{d}}$; $(vi)$ $\left(\mathscr{I}_\lambda^n,\tau\right)$ is $\mathbb{R}$-compact; $(vii)$ $\left(\mathscr{I}_\lambda^n,\tau\right)$ is  infra H-closed. Also we prove that for an arbitrary positive integer $n$ and an arbitrary infinite cardinal $\lambda$  every shift-continuous semiregular feebly compact $T_1$-topology $\tau$ on $\mathscr{I}_\lambda^n$ is compact.
\end{abstract}

\maketitle

We follow the terminology of~\cite{Carruth-Hildebrant-Koch-1983-1986, Clifford-Preston-1961-1967, Engelking-1989, Petrich-1984, Ruppert-1984}. If $X$ is a topological space and $A\subseteq X$, then by $\operatorname{cl}_X(A)$ and $\operatorname{int}_X(A)$ we denote the topological closure and interior of $A$ in $X$, respectively. By $|A|$ we denote the cardinality of a set $A$, by $A\triangle B$ the symmetric difference of sets $A$ and $B$, by $\mathbb{N}$ the set of positive integers, and by $\omega$ the first infinite cardinal.

A semigroup $S$ is called \emph{inverse} if every $a$ in $S$ possesses an unique inverse $a^{-1}$, i.e. if there exists an unique element $a^{-1}$ in $S$ such that
\begin{equation*}
    aa^{-1}a=a \qquad \mbox{and} \qquad a^{-1}aa^{-1}=a^{-1}.
\end{equation*}
A map which associates to any element of an inverse semigroup its inverse is called the \emph{inversion}.

A {\it topological} ({\it inverse}) {\it semigroup} is a topological space together with a continuous semigroup operation (and an~inversion, respectively). Obviously, the inversion defined on a topological inverse semigroup is a homeomorphism. If $S$ is a~semigroup (an~inverse semigroup) and $\tau$ is a topology on $S$ such that $(S,\tau)$ is a topological (inverse) semigroup, then we
shall call $\tau$ a \emph{semigroup} (\emph{inverse}) \emph{topology} on $S$. A {\it semitopological semigroup} is a topological space together with a separately continuous semigroup operation. If $S$ is a~semigroup (an~inverse semigroup) and $\tau$ is a topology on $S$ such that $(S,\tau)$ is a semitopological semigroup (with continuous inversion), then we shall call $\tau$ a \emph{shift-continuous} (\emph{inverse}) \emph{topology} on $S$.

If $S$ is a~semigroup, then by $E(S)$ we denote the subset of all idempotents of $S$. On  the set of idempotents $E(S)$ there exists a natural partial order: $e\leqslant f$ \emph{if and only if} $ef=fe=e$. A \emph{semilattice} is a commutative semigroup of idempotents. A {\it topological} ({\it semitopological}) {\it semilattice} is a topological space together with a continuous (separately continuous) semilattice operation. If $S$ is a~semilattice and $\tau$ is a topology on $S$ such that $(S,\tau)$ is a topological semilattice, then we shall call $\tau$ a \emph{semilattice} \emph{topology} on $S$.

Every inverse semigroup $S$ admits a partial order:
\begin{equation*}
  a\preccurlyeq b \qquad \hbox{if and only if there exists} \qquad e\in E(S) \quad \hbox{such that} \quad a=eb.
\end{equation*}
We shall say that $\preccurlyeq$ is the \emph{natural partial order} on $S$.

Let $\lambda$ be an arbitrary non-zero cardinal. A map $\alpha$ from a subset $D$ of $\lambda$ into $\lambda$ is called a \emph{partial transformation} of $\lambda$. In this case the set $D$ is called the \emph{domain} of $\alpha$ and is denoted by $\operatorname{dom}\alpha$. The image of an element $x\in\operatorname{dom}\alpha$ under $\alpha$ is denoted by $x\alpha$  Also, the set $\{ x\in \lambda\colon y\alpha=x \mbox{ for some } y\in Y\}$ is called the \emph{range} of $\alpha$ and is denoted by $\operatorname{ran}\alpha$. The cardinality of $\operatorname{ran}\alpha$ is called the \emph{rank} of $\alpha$ and is denoted by $\operatorname{rank}\alpha$. For convenience we denote by $\varnothing$ the empty transformation, a partial mapping with $\operatorname{dom}\varnothing=\operatorname{ran}\varnothing=\varnothing$.

Let $\mathscr{I}_\lambda$ denote the set of all partial one-to-one transformations of $\lambda$ together with the following semigroup operation:
\begin{equation*}
    x(\alpha\beta)=(x\alpha)\beta \quad \mbox{if} \quad
    x\in\operatorname{dom}(\alpha\beta)=\{
    y\in\operatorname{dom}\alpha\colon
    y\alpha\in\operatorname{dom}\beta\}, \qquad \mbox{for} \quad
    \alpha,\beta\in\mathscr{I}_\lambda.
\end{equation*}
The semigroup $\mathscr{I}_\lambda$ is called the \emph{symmetric
inverse semigroup} over the cardinal $\lambda$~(see \cite{Clifford-Preston-1961-1967}). The symmetric
inverse semigroup was introduced by V.~V.~Wagner~\cite{Wagner-1952}
and it plays a major role in the theory of semigroups.


Put
$\mathscr{I}_\lambda^n=\{ \alpha\in\mathscr{I}_\lambda\colon
\operatorname{rank}\alpha\leqslant n\}$,
for $n=1,2,3,\ldots$. Obviously,
$\mathscr{I}_\lambda^n$ ($n=1,2,3,\ldots$) are inverse semigroups,
$\mathscr{I}_\lambda^n$ is an ideal of $\mathscr{I}_\lambda$, for each $n=1,2,3,\ldots$. The semigroup
$\mathscr{I}_\lambda^n$ is called the \emph{symmetric inverse semigroup of
finite transformations of the rank $\leqslant n$}. By
\begin{equation*}
\left({%
\begin{array}{cccc}
  x_1 & x_2 & \cdots & x_n \\
  y_1 & y_2 & \cdots & y_n \\
\end{array}%
}\right)
\end{equation*}
we denote a partial one-to-one transformation which maps $x_1$ onto $y_1$, $x_2$ onto $y_2$, $\ldots$, and $x_n$ onto $y_n$. Obviously, in such case we have $x_i\neq x_j$ and $y_i\neq y_j$ for $i\neq j$ ($i,j=1,2,3,\ldots,n$). The empty partial map $\varnothing\colon \lambda\rightharpoonup\lambda$ is denoted by $\operatorname{\textbf{0}}$. It is obvious that $\operatorname{\textbf{0}}$ is zero of the semigroup $\mathscr{I}_\lambda^n$.

Let $\lambda$ be a non-zero cardinal. On the set
 $
 B_{\lambda}=(\lambda\times\lambda)\cup\{ 0\}
 $,
where $0\notin\lambda\times\lambda$, we define the semigroup
operation ``$\, \cdot\, $'' as follows
\begin{equation*}
(a, b)\cdot(c, d)=
\left\{
  \begin{array}{cl}
    (a, d), & \hbox{ if~ } b=c;\\
    0, & \hbox{ if~ } b\neq c,
  \end{array}
\right.
\end{equation*}
and $(a, b)\cdot 0=0\cdot(a, b)=0\cdot 0=0$ for $a,b,c,d\in
\lambda$. The semigroup $B_{\lambda}$ is called the
\emph{semigroup of $\lambda\times\lambda$-matrix units}~(see
\cite{Clifford-Preston-1961-1967}). Obviously, for any cardinal $\lambda>0$, the semigroup
of $\lambda\times\lambda$-matrix units $B_{\lambda}$ is isomorphic
to $\mathscr{I}_\lambda^1$.

A subset $A$ of a topological space $X$ is called \emph{regular open} if $\operatorname{int}_X(\operatorname{cl}_X(A))=A$.

We recall that a topological space $X$ is said to be
\begin{itemize}
  \item \emph{functionally Hausdorff} if for every pair of distinct points $x_1,x_2\in X$ there exists a continuous function $f\colon X\rightarrow [0,1]$ such that $f(x_1)=0$ and $f(x_2)=1$;
  \item \emph{semiregular} if $X$ has a base consisting of regular open subsets;
  \item \emph{quasiregular} if for any non-empty open set $U\subset X$ there exists a non-empty open set $V\subset U$ such that $\operatorname{cl}_X(V) \subseteq U$;
  \item \emph{compact} if each open cover of $X$ has a finite subcover;
  \item \emph{sequentially compact} if each sequence $\{x_i\}_{i\in\mathbb{N}}$ of $X$ has a convergent subsequence in $X$;
  \item \emph{countably compact} if each open countable cover of $X$ has a finite subcover;
  \item \emph{H-closed} if $X$ is a closed subspace of every Hausdorff topological space in which it is contained;
  \item \emph{infra H-closed} provided that any continuous image of $X$ into any first countable Hausdorff space is closed (see \cite{Hajek-Todd-1975});
  \item \emph{countably compact at a subset} $A\subseteq X$ if every infinite subset $B\subseteq A$  has  an  accumulation  point $x$ in $X$;
  \item \emph{countably pracompact} if there exists a dense subset $A$ in $X$  such that $X$ is countably compact at $A$;
  \item \emph{feebly compact} if each locally finite open cover of $X$ is finite;
  \item $d$-\emph{feebly compact} (or \emph{\textsf{DFCC}}) if every discrete family of open subsets in $X$ is finite (see \cite{Matveev-1998});
  \item \emph{pseudocompact} if $X$ is Tychonoff and each continuous real-valued function on $X$ is bounded;
  \item $Y$-\emph{compact} for some topological space $Y$, if $f(X)$ is compact, for any continuous map $f\colon X\to Y$.
\end{itemize}

According to Theorem~3.10.22 of \cite{Engelking-1989}, a Tychonoff topological space $X$ is feebly compact if and only if $X$ is pseudocompact. Also, a Hausdorff topological space $X$ is feebly compact if and only if every locally finite family of non-empty open subsets of $X$ is finite.  Every compact space and every sequentially compact space are countably compact, every countably compact space is countably pracompact, every countably pracompact space is feebly compact (see \cite{Arkhangelskii-1992}), every H-closed space is feebly compact too (see \cite{Gutik-Ravsky-2015a}). Also, every space feebly compact is infra H-closed by Proposition 2 and Theorem 3 of \cite{Hajek-Todd-1975}.

Topological properties of an infinite (semi)topological semigroup $\lambda\times \lambda$-matrix units were studied in \cite{Gutik-Pavlyk-2005, Gutik-Pavlyk-2005a, Gutik-Pavlyk-Reiter-2009}. In \cite{Gutik-Pavlyk-2005a} it was shown that on the infinite semitopological semigroup $\lambda\times \lambda$-matrix units $B_\lambda$ there exists a unique Hausdorff topology $\tau_c$ such that $(B_\lambda,\tau_c)$ is a compact semitopological semigroup and it was also shown that every pseudocompact Hausdorff shift-continuous topology $\tau$ on $B_\lambda$ is compact. Also, in \cite{Gutik-Pavlyk-2005a} it was proved that every non-zero element of a Hausdorff semitopological semigroup $\lambda\times \lambda$-matrix units $B_\lambda$ is an isolated point in the topological space $B_\lambda$. In \cite{Gutik-Pavlyk-2005} it was shown that the infinite semigroup $\lambda\times \lambda$-matrix units $B_\lambda$ cannot be embedded into a compact Hausdorff topological semigroup, every Hausdorff topological inverse semigroup $S$ that contains $B_\lambda$ as a subsemigroup, contains $B_\lambda$ as a closed subsemigroup, i.e., $B_\lambda$ is \emph{algebraically complete} in the class of Hausdorff topological inverse semigroups. This result in \cite{Gutik-Lawson-Repov-2009} was extended onto so called inverse semigroups with \emph{tight ideal series} and, as a corollary, onto the semigroup $\mathscr{I}_\lambda^n$. Also, in \cite{Gutik-Reiter-2009} it was proved that for every positive integer $n$ the semigroup $\mathscr{I}_\lambda^n$ is \emph{algebraically $h$-complete} in the class of Hausdorff topological inverse semigroups, i.e., every homomorphic image of $\mathscr{I}_\lambda^n$ is algebraically complete in the class of Hausdorff topological inverse semigroups. In the paper \cite{Gutik-Reiter-2010} this result was extended onto the class of Hausdorff semitopological inverse semigroups and it was shown therein that for an infinite cardinal $\lambda$ the semigroup $\mathscr{I}_\lambda^n$ admits a unique Hausdorff topology $\tau_c$ such that $(\mathscr{I}_\lambda^n,\tau_c)$ is a compact semitopological semigroup. Also, it was proved in \cite{Gutik-Reiter-2010} that every countably compact Hausdorff shift-continuous topology $\tau$ on $B_\lambda$ is compact. In \cite{Gutik-Pavlyk-Reiter-2009} it was shown that a topological semigroup of finite partial bijections $\mathscr{I}_\lambda^n$ with a compact subsemigroup of idempotents is absolutely H-closed (i.e., every homomorphic image of $\mathscr{I}_\lambda^n$ is algebraically complete in the class of Hausdorff topological semigroups) and any
countably compact topological semigroup does not contain $\mathscr{I}_\lambda^n$ as a subsemigroup for infinite cardinal $\lambda$. In \cite{Gutik-Pavlyk-Reiter-2009} there were given sufficient conditions onto a topological semigroup $\mathscr{I}_\lambda^1$ to be non-H-closed. Also in \cite{Gutik-2014} it was proved that an infinite semitopological semigroup of $\lambda\times\lambda$-matrix units $B_\lambda$ is H-closed in the class of semitopological semigroups if and only if the space $B_\lambda$ is compact.

For an arbitrary positive integer $n$ and an arbitrary non-zero cardinal $\lambda$ we put
\begin{equation*}
  \exp_n\lambda=\left\{A\subseteq \lambda\colon |A|\leqslant n\right\}.
\end{equation*}

It is obvious that for any positive integer $n$ and any non-zero cardinal $\lambda$ the set $\exp_n\lambda$ with the binary operation $\cap$ is a semilattice. Later in this paper by $\exp_n\lambda$ we shall denote the semilattice $\left(\exp_n\lambda,\cap\right)$. It is easy to see that $\exp_n\lambda$ is isomorphic to the subsemigroup of idempotents (the band) of the semigroup $\mathscr{I}_\lambda^n$ for any positive integer $n$. We observe that for every positive integer $n$ the band of the semigroup $\mathscr{I}_\lambda^n$ is isomorphic to the semilattice $\exp_n\lambda$ by the mapping $E(\mathscr{I}_\lambda^n)\ni\varepsilon\mapsto\operatorname{dom}\varepsilon$.

In the paper \cite{Gutik-Sobol-2016} feebly compact shift-continuous topologies $\tau$ on the semilattice $\exp_n\lambda$ were studied, and all compact semilattice topologies on $\exp_n\lambda$ were described. In \cite{Gutik-Sobol-2016} it was whown that for an arbitrary positive integer $n$ and an arbitrary infinite cardinal $\lambda$ for a $T_1$-topology $\tau$ on $\exp_n\lambda$ the following conditions are equivalent: $(i)$ $\left(\exp_n\lambda,\tau\right)$ is a compact topological semilattice; $(ii)$ $\left(\exp_n\lambda,\tau\right)$ is a countably compact topological semilattice; $(iii)$ $\left(\exp_n\lambda,\tau\right)$ is a feebly compact topological semilattice; $(iv)$ $\left(\exp_n\lambda,\tau\right)$ is a compact semitopological semilattice; $(v)$ $\left(\exp_n\lambda,\tau\right)$ is a countably compact semitopological semilattice. Also, in \cite{Gutik-Sobol-2016} there was constructed a countably pracompact H-closed quasiregular non-semiregular topology $\tau_{\operatorname{\textsf{fc}}}^2$ such that $\left(\exp_2\lambda,\tau_{\operatorname{\textsf{fc}}}^2\right)$ is a semitopological semilattice with the discontinuous semilattice operation and it was proved that for an arbitrary positive integer $n$ and an arbitrary infinite cardinal $\lambda$ a semiregular feebly compact semitopological semilattice $\exp_n\lambda$ is a compact topological semilattice. In \cite{Gutik-Sobol-2016a} it was shown that for an arbitrary positive integer $n$ and an arbitrary infinite cardinal $\lambda$ for a $T_1$-topology $\tau$ on $\exp_n\lambda$ the following conditions are equivalent: $(i)$ $\tau$ is countably pracompact; $(ii)$ $\tau$ is feebly compact; $(iii)$ $\tau$ is $d$-feebly compact; $(iv)$ $\left(\exp_n\lambda,\tau\right)$ is an H-closed space.

\smallskip

This paper is a continuation of \cite{Gutik-Lawson-Repov-2009, Gutik-Pavlyk-2005a, Gutik-Reiter-2009, Gutik-Reiter-2010}. We study feebly compact shift-continuous $T_1$-topologies on the semigroup $\mathscr{I}_\lambda^n$. For any positive integer $n\geqslant2$ and any infinite cardinal $\lambda$ a Hausdorff countably pracompact non-compact shift-continuous topology on $\mathscr{I}_\lambda^n$ is constructed. We show that for an arbitrary positive integer $n$ and an arbitrary infinite cardinal $\lambda$ for a $T_1$-topology $\tau$ on $\mathscr{I}_\lambda^n$ the following conditions are equivalent: $(i)$ $\tau$ is countably pracompact; $(ii)$ $\tau$ is feebly compact; $(iii)$ $\tau$ is $d$-feebly compact; $(iv)$ $\left(\mathscr{I}_\lambda^n,\tau\right)$ is H-closed; $(v)$ $\left(\mathscr{I}_\lambda^n,\tau\right)$ is $\mathbb{N}_{\mathfrak{d}}$-compact for the discrete countable space $\mathbb{N}_{\mathfrak{d}}$;
$(vi)$ $\left(\mathscr{I}_\lambda^n,\tau\right)$ is $\mathbb{R}$-compact; $(vii)$ $\left(\mathscr{I}_\lambda^n,\tau\right)$ is  infra H-closed. Also we prove that for an arbitrary positive integer $n$ and an arbitrary infinite cardinal $\lambda$  every shift-continuous semiregular feebly compact $T_1$-topology $\tau$ on $\mathscr{I}_\lambda^n$ is compact.



\medskip

Later we shall assume that $n$ is an arbitrary positive integer.

For every element $\alpha$ of the semigroup $\mathscr{I}_\lambda^n$ we put
\begin{equation*}
  {\uparrow}_l\alpha=\left\{\beta\in\mathscr{I}_\lambda^n\colon \alpha\alpha^{-1}\beta=\alpha\right\} \qquad \hbox{and} \qquad
  {\uparrow}_r\alpha=\left\{\beta\in\mathscr{I}_\lambda^n\colon\beta\alpha^{-1}\alpha=\alpha\right\}.
\end{equation*}
Then  Proposition~5 of \cite{Gutik-Reiter-2010} implies that ${\uparrow}_l\alpha={\uparrow}_r\alpha$ and by Lemma~6 of \cite[Section~1.4]{Lawson-1998} we have that $\alpha\preccurlyeq\beta$ if and only if $\beta\in{\uparrow}_l\alpha$ for $\alpha,\beta\in\mathscr{I}_\lambda^n$. Hence we put ${\uparrow}_{\preccurlyeq}\alpha={\uparrow}_l\alpha={\uparrow}_r\alpha$ for any $\alpha\in\mathscr{I}_\lambda^n$.

The definition of the semigroup operation of $\mathscr{I}_\lambda^n$ implies the following trivial lemma.

\begin{lemma}\label{lemma-2.1}
Let $n$ be an arbitrary positive integer and $\lambda$ be any cardinal. Then for any elements $\alpha$ and $\beta$ of the semigroup $\mathscr{I}_\lambda^n$ the sets $\alpha\mathscr{I}_\lambda^n\beta$ and
\begin{equation*}
  {\downarrow}_\preccurlyeq\alpha=\left\{\gamma\in\mathscr{I}_\lambda^n \colon \gamma\preccurlyeq\alpha\right\}
\end{equation*}
are finite.
\end{lemma}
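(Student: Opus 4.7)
The plan is to exploit the concrete structure of the symmetric inverse semigroup and reduce both claims to counting partial bijections between finite sets of size $\leqslant n$.

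For the set ${\downarrow}_{\preccurlyeq}\alpha$ I would use the definition of the natural partial order: $\gamma\preccurlyeq\alpha$ means $\gamma=\varepsilon\alpha$ for some idempotent $\varepsilon\in E(\mathscr{I}_\lambda^n)$. Since the idempotents of $\mathscr{I}_\lambda$ are precisely the partial identity maps, the product $\varepsilon\alpha$ is the restriction of $\alpha$ to $\operatorname{dom}\alpha\cap\operatorname{dom}\varepsilon$. In particular, every $\gamma\in{\downarrow}_{\preccurlyeq}\alpha$ is a restriction of $\alpha$ to some subset of $\operatorname{dom}\alpha$, and distinct restrictions give distinct partial maps. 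Since $|\operatorname{dom}\alpha|=\operatorname{rank}\alpha\leqslant n$, this gives at most $2^n$ elements, so ${\downarrow}_{\preccurlyeq}\alpha$ is finite.

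For the set $\alpha\mathscr{I}_\lambda^n\beta$ I would observe that for every $\sigma\in\mathscr{I}_\lambda^n$, the product $\alpha\sigma\beta$ is a partial one-to-one transformation with
\[
\operatorname{dom}(\alpha\sigma\beta)\subseteq\operatorname{dom}\alpha
\qquad\text{and}\qquad
\operatorname{ran}(\alpha\sigma\beta)\subseteq\operatorname{ran}\beta,
\]
because the domain of a composition is always contained in the domain of the first factor, and similarly for the range of the last factor. Hence every element of $\alpha\mathscr{I}_\lambda^n\beta$ is a partial bijection between a subset of $\operatorname{dom}\alpha$ and a subset of $\operatorname{ran}\beta$, and both of these ambient sets have cardinality at most $n$. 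The number of such partial bijections is crudely bounded above by $\sum_{k=0}^{n}\binom{n}{k}^{2}k!$, which is finite; therefore $\alpha\mathscr{I}_\lambda^n\beta$ is finite as well.

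Neither step presents any real obstacle: both reduce to the elementary facts that idempotents in $\mathscr{I}_\lambda$ are partial identities and that composing with $\alpha$ on the left (respectively $\beta$ on the right) can only shrink the domain (respectively the range). The only thing worth being careful about is that ${\downarrow}_{\preccurlyeq}\alpha$ must be defined via the two-sided partial order $\preccurlyeq$ introduced in the excerpt, which here coincides with restriction since $\mathscr{I}_\lambda^n$ is inverse; the identification ${\uparrow}_l\alpha={\uparrow}_r\alpha={\uparrow}_{\preccurlyeq}\alpha$ recalled just before the lemma makes this automatic.
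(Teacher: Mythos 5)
Your proposal is correct and follows essentially the same route as the paper: elements of $\alpha\mathscr{I}_\lambda^n\beta$ have domain inside $\operatorname{dom}\alpha$ and range inside $\operatorname{ran}\beta$ (the paper even records this as an equality), and elements of ${\downarrow}_{\preccurlyeq}\alpha$ are restrictions of $\alpha$ to subsets of its finite domain. The explicit bounds $2^n$ and $\sum_{k=0}^{n}\binom{n}{k}^{2}k!$ are just a quantitative gloss on the same argument.
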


\begin{proof}
For any elements $\alpha$ and $\beta$ of $\mathscr{I}_\lambda^n$ we have that
\begin{equation*}
  \alpha\mathscr{I}_\lambda^n\beta=\alpha\mathscr{I}_\lambda^n\cap\mathscr{I}_\lambda^n\beta=\left\{\gamma\in\mathscr{I}_\lambda^n \colon \operatorname{dom}\gamma\subseteq\operatorname{dom}\alpha \; \hbox{~and~} \;  \operatorname{ran}\gamma\subseteq\operatorname{ran}\beta\right\}.
\end{equation*}
Since the sets $\operatorname{dom}\alpha$ and $\operatorname{ran}\beta$ are finite, $\alpha\mathscr{I}_\lambda^n\beta$ is finite, as well.

For every $\gamma\in{\downarrow}_\preccurlyeq\alpha$ the definition of the natural partial order $\preccurlyeq$ on the semigroup $\mathscr{I}_\lambda^n$ (see \cite[Chapter~1]{Lawson-1998}) implies that the finite partial map $\gamma$ is a restriction of the finite partial map $\alpha$ onto the set $A=\operatorname{dom}\alpha\cap\operatorname{dom}\varepsilon$, where $\varepsilon$ is an idempotent of $\mathscr{I}_\lambda^n$ such that $\gamma=\varepsilon\alpha$. This implies that the set ${\downarrow}_\preccurlyeq\alpha$ is finite.
\end{proof}

\begin{lemma}\label{lemma-2.2}
Let $n$ be an arbitrary positive integer, $\lambda$ be any infinite cardinal and $\tau$ be a shift-continuous $T_1$-topology on semigroup $\mathscr{I}_\lambda^n$. Then for every element $\alpha$ of the semigroup $\mathscr{I}_\lambda^n$ the set ${\uparrow}_{\preccurlyeq}\alpha$ is open-and-closed in $\left(\mathscr{I}_\lambda^n,\tau\right)$, the space $\left(\mathscr{I}_\lambda^n,\tau\right)$ is functionally Hausdorff and hence it is quasi-regular.
\end{lemma}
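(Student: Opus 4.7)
The plan is to establish the three assertions in order, with the open-and-closed claim for ${\uparrow}_{\preccurlyeq}\alpha$ being the technical heart. The key device is the two-sided translation $\phi\colon\mathscr{I}_\lambda^n\to\mathscr{I}_\lambda^n$, $\beta\mapsto\alpha\alpha^{-1}\beta\alpha^{-1}\alpha$, which is continuous by shift-continuity (it is the composition of a left and a right translation by the fixed idempotents $\alpha\alpha^{-1}$ and $\alpha^{-1}\alpha$). Its image lies in $\alpha\mathscr{I}_\lambda^n\alpha$, a finite set by Lemma~\ref{lemma-2.1}. A direct computation tracking domains and ranges of the partial bijections (forward: $\alpha\alpha^{-1}\beta=\alpha$ immediately gives $\phi(\beta)=\alpha\cdot\alpha^{-1}\alpha=\alpha$; converse: $\phi(\beta)=\alpha$ forces $\operatorname{dom}\alpha\subseteq\operatorname{dom}\beta$ and $\beta|_{\operatorname{dom}\alpha}=\alpha$) shows $\phi(\beta)=\alpha\Leftrightarrow\beta\in{\uparrow}_{\preccurlyeq}\alpha$, hence ${\uparrow}_{\preccurlyeq}\alpha=\phi^{-1}(\{\alpha\})$. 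Since $\tau$ is $T_1$, the singleton $\{\alpha\}$ and the finite set $F=\alpha\mathscr{I}_\lambda^n\alpha\setminus\{\alpha\}$ are both closed; combined with $\phi(\mathscr{I}_\lambda^n)\subseteq\alpha\mathscr{I}_\lambda^n\alpha$ this gives ${\uparrow}_{\preccurlyeq}\alpha=\phi^{-1}(\{\alpha\})=\phi^{-1}(\mathscr{I}_\lambda^n\setminus F)$, simultaneously closed and open.

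For functional Hausdorffness I take distinct $\alpha,\beta\in\mathscr{I}_\lambda^n$; antisymmetry of $\preccurlyeq$ forces at least one of $\alpha\not\preccurlyeq\beta$ or $\beta\not\preccurlyeq\alpha$, and without loss of generality I may assume $\beta\notin{\uparrow}_{\preccurlyeq}\alpha$. Then $f=1-\chi_{{\uparrow}_{\preccurlyeq}\alpha}\colon\mathscr{I}_\lambda^n\to[0,1]$ is continuous because ${\uparrow}_{\preccurlyeq}\alpha$ is clopen, and satisfies $f(\alpha)=0$, $f(\beta)=1$. For quasi-regularity, given a non-empty open $U\subseteq\mathscr{I}_\lambda^n$, I choose $\alpha\in U$ of maximal rank in $U$ (possible since ranks are bounded by $n$). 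Any $\beta\neq\alpha$ with $\alpha\preccurlyeq\beta$ is a proper extension of $\alpha$ as a partial bijection, so $\operatorname{rank}\beta>\operatorname{rank}\alpha$, contradicting the maximality of $\operatorname{rank}\alpha$ in $U$. Therefore $U\cap{\uparrow}_{\preccurlyeq}\alpha=\{\alpha\}$, making $\{\alpha\}$ open as an intersection of two open sets, and $V=\{\alpha\}$ satisfies $\operatorname{cl}_{\mathscr{I}_\lambda^n}(V)=\{\alpha\}\subseteq U$ by $T_1$.

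The one genuine obstacle is the openness half of the clopen claim. Closedness is immediate from the identity ${\uparrow}_{\preccurlyeq}\alpha=L_{\alpha\alpha^{-1}}^{-1}(\{\alpha\})$ alone, but a one-sided translation $L_{\alpha\alpha^{-1}}$ maps into the still-infinite set $\{\delta\colon\operatorname{dom}\delta\subseteq\operatorname{dom}\alpha\}$, where the $T_1$-axiom no longer suffices to isolate $\alpha$. The essential trick is the simultaneous sandwiching by $\alpha\alpha^{-1}$ on the left and $\alpha^{-1}\alpha$ on the right, which constrains \emph{both} $\operatorname{dom}$ and $\operatorname{ran}$ into the finite sets $\operatorname{dom}\alpha$ and $\operatorname{ran}\alpha$, forcing $\phi$'s image into the finite pocket $\alpha\mathscr{I}_\lambda^n\alpha$ where $T_1$-separation makes $\{\alpha\}$ both open and closed among the image. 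The functional Hausdorff and quasi-regular clauses are then essentially formal consequences of having a clopen principal filter at every point.
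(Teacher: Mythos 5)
Your proof is correct and follows essentially the same route as the paper: both arguments hinge on the two-sided translation $\beta\mapsto\alpha\alpha^{-1}\beta\alpha^{-1}\alpha$, whose image lies in the finite set $\alpha\mathscr{I}_\lambda^n\alpha$ so that the $T_1$ axiom isolates $\alpha$ there, yielding ${\uparrow}_{\preccurlyeq}\alpha$ as simultaneously the preimage of the closed singleton $\{\alpha\}$ and of the open complement of a finite set. Your treatment of functional Hausdorffness via the characteristic function of the clopen up-set and of quasi-regularity via a maximal-rank (hence $\preccurlyeq$-maximal) point of $U$ likewise matches the paper, with your write-up of the clopen step being, if anything, the cleaner of the two.
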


\begin{proof}
Fix an arbitrary $\alpha\in\mathscr{I}_\lambda^n$. Then $\alpha\in\alpha\mathscr{I}_\lambda^n\alpha$ and
 \begin{equation*}
\alpha\mathscr{I}_\lambda^n\alpha=\alpha\mathscr{I}_\lambda^n\cap\mathscr{I}_\lambda^n\alpha= \alpha\alpha^{-1}\mathscr{I}_\lambda^n\cap\mathscr{I}_\lambda^n\alpha^{-1}\alpha=\alpha\alpha^{-1}\mathscr{I}_\lambda^n\alpha^{-1}\alpha,
\end{equation*}
because $\mathscr{I}_\lambda^n$ is an inverse semigroup. Since the topology $\tau$ is $T_1$, Lemma~\ref{lemma-2.1} implies that the set $\left(\alpha\mathscr{I}_\lambda^n\alpha\right) \setminus\{\alpha\}$ is closed in $\left(\mathscr{I}_\lambda^n,\tau\right)$. By the separate continuity of the semigroup operation in $\left(\mathscr{I}_\lambda^n,\tau\right)$ we have that there exists an open neighbourhood $U(\alpha)$ of the point $\alpha$ in $\left(\mathscr{I}_\lambda^n,\tau\right)$ such that
\begin{equation*}
\alpha\alpha^{-1}\cdot U(\alpha)\cdot\alpha^{-1}\alpha\subseteq\mathscr{I}_\lambda^n\setminus\left( \left(\alpha\mathscr{I}_\lambda^n\cup\mathscr{I}_\lambda^n\alpha\right) \setminus\{\alpha\}\right).
\end{equation*}
The last inclusion implies that $U(\alpha)\subseteq {\uparrow}\alpha$. Again, since the semigroup operation in $\left(\mathscr{I}_\lambda^n,\tau\right)$ is separately continuous the set ${\uparrow}_{\preccurlyeq}\alpha$ is open in $\left(\mathscr{I}_\lambda^n,\tau\right)$ as a full preimage of $U(\alpha)$ and the set ${\uparrow}_{\preccurlyeq}\alpha$ is closed in $\left(\mathscr{I}_\lambda^n,\tau\right)$ as a full preimage of the singleton set $\{\alpha\}$.

Fix arbitrary distinct elements $\alpha$ and $\beta$ of the semigroup $\mathscr{I}_\lambda^n$. Then either $\alpha$ and $\beta$ are comparable or not with  respect to the natural partial order on $\mathscr{I}_\lambda^n$. If $\alpha\preccurlyeq\beta$ or $\alpha$ and $\beta$ are incomparable in $\left(\mathscr{I}_\lambda^n,\preccurlyeq\right)$ then it is obvious that the map $g\colon \mathscr{I}_\lambda^n\to[0,1]$ defined by the formula
\begin{equation*}
  (\gamma)f=
\left\{
  \begin{array}{ll}
    1, & \hbox{if~} \gamma\in{\uparrow}_{\preccurlyeq}\beta;\\
    0, & \hbox{if~} \gamma\notin{\uparrow}_{\preccurlyeq}\beta
  \end{array}
\right.
\end{equation*}
is continuous. We observe that quasi-regularity of $\left(\mathscr{I}_\lambda^n,\tau\right)$ follows from the fact that every non-empty open subset $U$ of $\left(\mathscr{I}_\lambda^n,\tau\right)$ contains a maximal element $\delta$ with respect to the natural partial order $\preccurlyeq$ on $\mathscr{I}_\lambda^n$ such that ${\uparrow}_{\preccurlyeq}\alpha$ is an open-and-closed subset of $\left(\mathscr{I}_\lambda^n,\tau\right)$ and hence, since $\tau$ is a $T_1$-topology, $\{\alpha\}\subseteq U$ is an open-and-closed subset of $\left(\mathscr{I}_\lambda^n,\tau\right)$.
\end{proof}

A topological space $X$ is called
\begin{itemize}
  \item \emph{totally disconnected} if the connected components in $X$ are singleton sets;
  \item \emph{scattered} if $X$ does not contain non-empty dense in itself subset, which is equivalent that every non-empty subset of $X$ has an isolated point in itself.
\end{itemize}

Lemma~\ref{lemma-2.2} implies the following corollary:

\begin{corollary}\label{corollary-2.2a}
Let $n$ be an arbitrary positive integer, $\lambda$ be any infinite cardinal and $\tau$ be a shift-continuous $T_1$-topology on the semigroup $\mathscr{I}_\lambda^n$. Then $\left(\mathscr{I}_\lambda^n,\tau\right)$ is a totally disconnected scattered space.
\end{corollary}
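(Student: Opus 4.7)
The plan is to lean on Lemma~\ref{lemma-2.2}, which tells us that every up-set ${\uparrow}_{\preccurlyeq}\alpha$ is open-and-closed in $\left(\mathscr{I}_\lambda^n,\tau\right)$. Using this together with the crucial fact that the rank function on $\mathscr{I}_\lambda^n$ takes only the finitely many values $0,1,\ldots,n$, both properties will fall out quickly.

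First I would prove scatteredness. Let $A\subseteq\mathscr{I}_\lambda^n$ be non-empty. Because ranks are bounded by $n$, the number $r=\max\{\operatorname{rank}\gamma\colon\gamma\in A\}$ exists; pick any $\alpha\in A$ with $\operatorname{rank}\alpha=r$. Every $\beta\in{\uparrow}_{\preccurlyeq}\alpha$ satisfies $\alpha=\varepsilon\beta$ for some idempotent $\varepsilon$, so $\alpha$ is a restriction of $\beta$ and hence $\operatorname{dom}\alpha\subseteq\operatorname{dom}\beta$ and $\operatorname{rank}\beta\geqslant r$. For such a $\beta$ additionally lying in $A$ the maximality of $r$ forces $\operatorname{rank}\beta=r$; since the domains are finite with $\operatorname{dom}\alpha\subseteq\operatorname{dom}\beta$ and of equal cardinality they coincide, whence $\beta=\alpha$. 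Thus ${\uparrow}_{\preccurlyeq}\alpha\cap A=\{\alpha\}$ is open in $A$ by Lemma~\ref{lemma-2.2}, so $\alpha$ is an isolated point of $A$, which gives scatteredness.

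Next, total disconnectedness follows immediately. Since $\tau$ is $T_1$, any connected subset of $\left(\mathscr{I}_\lambda^n,\tau\right)$ with more than one point would be dense-in-itself (as removing an isolated point would yield a proper clopen disconnection in the subspace topology), and this contradicts scatteredness just proved. Hence every connected component is a singleton. Alternatively, for any two distinct elements $\alpha,\beta\in\mathscr{I}_\lambda^n$, at least one of the clopen sets ${\uparrow}_{\preccurlyeq}\alpha$ or ${\uparrow}_{\preccurlyeq}\beta$ separates them, so even the quasi-components are singletons.

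There is no real obstacle here: the entire argument is already latent in Lemma~\ref{lemma-2.2}, and the only point worth checking is that $\alpha\preccurlyeq\beta$ together with $\operatorname{rank}\alpha=\operatorname{rank}\beta$ forces $\alpha=\beta$, which uses nothing beyond the combinatorial description of the natural partial order on an inverse semigroup of finite partial bijections.
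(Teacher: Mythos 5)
Your proof is correct and follows exactly the route the paper intends: the paper gives no written proof, merely asserting that the corollary follows from Lemma~\ref{lemma-2.2}, and your argument supplies the natural details — maximal-rank elements of any non-empty subset are isolated via the clopen sets ${\uparrow}_{\preccurlyeq}\alpha$, and total disconnectedness follows since these clopen sets (together with $T_1$) separate any two distinct points.
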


A partial order $\leq$ on a topological space $X$ is called closed if  the relation $\leq$ is a closed subset of $X\times X$ in the product topology. In this case $(X,\leq)$ is called a \emph{pospace}~\cite{Gierz-Hofmann-Keimel-Lawson-Mislove-Scott-2003}.

Lemma~\ref{lemma-2.2} and Proposition~VI-1.4 from~\cite{Gierz-Hofmann-Keimel-Lawson-Mislove-Scott-2003} imply the following corollary:

\begin{corollary}\label{corollary-2.2b}
Let $n$ be an arbitrary positive integer, $\lambda$ be any infinite cardinal and $\tau$ be a shift-continuous $T_1$-topology on semigroup $\mathscr{I}_\lambda^n$. Then $\left(\mathscr{I}_\lambda^n,\tau,\preccurlyeq\right)$ is a pospace
\end{corollary}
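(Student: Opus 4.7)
The plan is to verify directly that the graph of $\preccurlyeq$ is closed in $\mathscr{I}_\lambda^n\times\mathscr{I}_\lambda^n$, which is the definition of a pospace; the role of Proposition~VI-1.4 from \cite{Gierz-Hofmann-Keimel-Lawson-Mislove-Scott-2003} is just to package the standard separation criterion for closed partial orders. The only input I need from Lemma~\ref{lemma-2.2} is that ${\uparrow}_{\preccurlyeq}\alpha$ is open-and-closed in $\left(\mathscr{I}_\lambda^n,\tau\right)$ for every $\alpha\in\mathscr{I}_\lambda^n$.

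First I would fix an arbitrary pair $(\alpha,\beta)\in\mathscr{I}_\lambda^n\times\mathscr{I}_\lambda^n$ with $\alpha\not\preccurlyeq\beta$, so that $\beta\notin{\uparrow}_{\preccurlyeq}\alpha$. By Lemma~\ref{lemma-2.2}, the set $U={\uparrow}_{\preccurlyeq}\alpha$ is open and contains $\alpha$, while $V=\mathscr{I}_\lambda^n\setminus{\uparrow}_{\preccurlyeq}\alpha$ is also open (because ${\uparrow}_{\preccurlyeq}\alpha$ is closed) and contains $\beta$. Thus $U\times V$ is an open rectangle in the product topology containing $(\alpha,\beta)$.

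Next I would check that $U\times V$ is disjoint from the graph of $\preccurlyeq$. Indeed, if $(\alpha',\beta')\in U\times V$ satisfied $\alpha'\preccurlyeq\beta'$, then from $\alpha'\in{\uparrow}_{\preccurlyeq}\alpha$ we would have $\alpha\preccurlyeq\alpha'$, and transitivity of $\preccurlyeq$ would yield $\alpha\preccurlyeq\beta'$; this would force $\beta'\in{\uparrow}_{\preccurlyeq}\alpha=U$, contradicting $\beta'\in V$. Consequently $\alpha'\not\preccurlyeq\beta'$ for every $(\alpha',\beta')\in U\times V$, so the complement of the graph of $\preccurlyeq$ is a union of such open rectangles, and $\preccurlyeq$ is closed in $\mathscr{I}_\lambda^n\times\mathscr{I}_\lambda^n$.

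There is no genuine obstacle in this step: all the substantial work is hidden in Lemma~\ref{lemma-2.2}, and what remains is a one-line application of transitivity together with the clopenness of principal filters. The only mild subtlety is choosing the right pair of neighbourhoods, namely the upper set ${\uparrow}_{\preccurlyeq}\alpha$ together with its own complement, rather than attempting to build independent neighbourhoods of $\alpha$ and $\beta$.
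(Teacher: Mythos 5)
Your proof is correct and follows the same route as the paper: the paper simply invokes Lemma~\ref{lemma-2.2} together with Proposition~VI-1.4 of \cite{Gierz-Hofmann-Keimel-Lawson-Mislove-Scott-2003}, and your argument is precisely the direct unpacking of that criterion, separating $\alpha\not\preccurlyeq\beta$ by the open upper set ${\uparrow}_{\preccurlyeq}\alpha$ and its (open, lower) complement and using transitivity to see that this rectangle misses the graph of $\preccurlyeq$. Nothing further is needed.
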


The following example shows that the statement of Lemma~\ref{lemma-2.2} does not hold in the case when $\left(\mathscr{I}_\lambda^n,\tau\right)$ is a $T_0$-space.

\begin{example}\label{example-2.3}
For an arbitrary positive integer $n$ and an arbitrary infinite cardinal $\lambda$ we define a topology $\tau_0$ on $\mathscr{I}_\lambda^n$ in the following way:
\begin{itemize}
  \item[$(i)$] all non-zero elements of the semigroup $\mathscr{I}_\lambda^n$ are isolated points in $\left(\mathscr{I}_\lambda^n,\tau_0\right)$; \; and
  \item[$(ii)$] $\mathscr{I}_\lambda^n$ is the unique open neighbourhood of zero in $\left(\mathscr{I}_\lambda^n,\tau_0\right)$.
\end{itemize}
Simple verifications show that the semigroup operation and inversion on $\left(\mathscr{I}_\lambda^n,\tau_0\right)$ are continuous.
\end{example}

We need the following example from \cite{Gutik-Reiter-2010}.

\begin{example}[\cite{Gutik-Reiter-2010}]\label{example-2.4}
Fix an arbitrary positive integer $n$. The following family
\begin{equation*}
\begin{split}
  \mathscr{B}_{\operatorname{\textsf{c}}}& =\left\{U_\alpha(\alpha_1,\ldots,\alpha_k)=
    {\uparrow}_{\preccurlyeq}\alpha\setminus({\uparrow}_{\preccurlyeq}\alpha_1\cup\cdots\cup
    {\uparrow}_{\preccurlyeq}\alpha_k) \colon\right. \\
    & \qquad \left. \alpha_i\in{\uparrow}_{\preccurlyeq}\alpha\setminus
    \{\alpha\}, \alpha, \alpha_i\in\mathscr{I}_\lambda^n, i=1,\ldots, k\right\}
\end{split}
\end{equation*}
determines a base of the topology $\tau_{\operatorname{\textsf{c}}}$ on $\mathscr{I}_\lambda^n$. By Proposition~10 from \cite{Gutik-Reiter-2010}, $\left(\mathscr{I}_\lambda^n,\tau_{\operatorname{\textsf{c}}}\right)$ is a Hausdorff compact semitopological semigroup with continuous inversion.
\end{example}

By Theorem~7 from \cite{Gutik-Reiter-2010}, for an arbitrary infinite cardinal $\lambda$ and any positive integer $n$ every countably compact Hausdorff semitopological semigroup $\mathscr{I}_\lambda^n$ is topologically isomorphic to $\left(\mathscr{I}_\lambda^n,\tau_{\operatorname{\textsf{c}}}\right)$.  By Corollary~\ref{corollary-2.2a} the topological space $\left(\mathscr{I}_\lambda^n,\tau_{\operatorname{\textsf{c}}}\right)$ is scattered. Since every countably compact scattered $T_3$-space is sequentially compact (see \cite[Theorem~5.7]{Vaughan-1984}), $\left(\mathscr{I}_\lambda^n,\tau_{\operatorname{\textsf{c}}}\right)$ is a sequentially compact space.

Next we summarise the above results in the following theorem.

\begin{theorem}\label{theorem-2.5}
Let $n$ be an arbitrary positive integer, $\lambda$ be any infinite cardinal and $\tau$ be a $T_1$-shift continuous topology on the semigroup $\mathscr{I}_\lambda^n$. Then the following conditions are equivalent:
\begin{itemize}
  \item[$(i)$] $\tau$ is compact;
  \item[$(ii)$] $\tau=\tau_{\operatorname{\textsf{c}}}$;
  \item[$(iii)$] $\tau$ is countably compact;
  \item[$(iv)$] $\tau$ is sequentially compact.
\end{itemize}
\end{theorem}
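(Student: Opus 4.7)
The plan is to establish the cycle $(i) \Rightarrow (iii) \Rightarrow (ii) \Rightarrow (i)$ together with $(ii) \Rightarrow (iv) \Rightarrow (iii)$, leaning almost entirely on results already in the paper and its references. First, $(i) \Rightarrow (iii)$ is immediate since every compact space is countably compact.

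For $(iii) \Rightarrow (ii)$, I would invoke Lemma~\ref{lemma-2.2}, which guarantees that any shift-continuous $T_1$-topology $\tau$ on $\mathscr{I}_\lambda^n$ is actually functionally Hausdorff, in particular Hausdorff. Hence $\left(\mathscr{I}_\lambda^n,\tau\right)$ is a countably compact Hausdorff semitopological semigroup, and by Theorem~7 of \cite{Gutik-Reiter-2010} (cited in the paragraph preceding the theorem) it must be topologically isomorphic to $\left(\mathscr{I}_\lambda^n,\tau_{\operatorname{\textsf{c}}}\right)$; since both topologies live on the same underlying semigroup, this forces $\tau=\tau_{\operatorname{\textsf{c}}}$.

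The implication $(ii) \Rightarrow (i)$ is exactly the compactness statement in Example~\ref{example-2.4}, i.e., Proposition~10 of \cite{Gutik-Reiter-2010}. For $(ii) \Rightarrow (iv)$, I would repeat the short argument already given in the discussion immediately before the theorem: by Corollary~\ref{corollary-2.2a} the space $\left(\mathscr{I}_\lambda^n,\tau_{\operatorname{\textsf{c}}}\right)$ is scattered, it is compact Hausdorff hence $T_3$, and Theorem~5.7 of \cite{Vaughan-1984} asserts that every countably compact scattered $T_3$-space is sequentially compact. Finally, $(iv) \Rightarrow (iii)$ is the classical fact that sequential compactness implies countable compactness.

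There is no real obstacle here: the theorem is a synthesis. The only point requiring any care is that one cannot appeal to the uniqueness theorem of \cite{Gutik-Reiter-2010} without first upgrading the assumed $T_1$ separation to Hausdorff; this is precisely what Lemma~\ref{lemma-2.2} provides, and it is the reason the statement restricts to $T_1$-topologies rather than to $T_0$-topologies, as Example~\ref{example-2.3} shows the equivalences genuinely fail at the $T_0$ level.
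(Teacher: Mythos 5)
Your proposal is correct and follows essentially the same route as the paper, which presents the theorem as a summary of the immediately preceding observations: Lemma~\ref{lemma-2.2} upgrades $T_1$ to Hausdorff, Theorem~7 of \cite{Gutik-Reiter-2010} gives $(iii)\Rightarrow(ii)$, Proposition~10 of \cite{Gutik-Reiter-2010} gives $(ii)\Rightarrow(i)$, and the scattered-space argument via Corollary~\ref{corollary-2.2a} and \cite[Theorem~5.7]{Vaughan-1984} gives $(ii)\Rightarrow(iv)$, with the remaining implications trivial. The only micro-point worth noting is that a topological isomorphism onto $\left(\mathscr{I}_\lambda^n,\tau_{\operatorname{\textsf{c}}}\right)$ yields $\tau=\tau_{\operatorname{\textsf{c}}}$ because every semigroup automorphism of $\mathscr{I}_\lambda^n$ preserves the natural partial order and hence is a self-homeomorphism of $\left(\mathscr{I}_\lambda^n,\tau_{\operatorname{\textsf{c}}}\right)$, a detail the paper also leaves implicit.
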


Since every feebly compact Hausdorff topology on the semigroup $\mathscr{I}_\lambda^1$ is compact, it is natural to ask: \emph{Does there exist a shift-continuous Hausdorff non-compact feebly compact topology $\tau$ on the semigroup $\mathscr{I}_\lambda^n$ for $n\geqslant 2$?}

The following example shows that for any infinite cardinal $\lambda$ and any positive integer $n\geqslant 2$ there exists a Hausdorff feebly compact topology $\tau$ on the semigroup $\mathscr{I}_\lambda^n$ such that $\left(\mathscr{I}_\lambda^n,\tau\right)$ is a non-compact semitopological semigroup.

\begin{example}\label{example-2.6}
Let $\lambda$ be any infinite cardinal and $\tau_{\operatorname{\textsf{c}}}^2=\tau_{\operatorname{\textsf{c}}}$ be the topology on the semigroup $\mathscr{I}_\lambda^2$ which is defined in Example~\ref{example-2.4}. We construct a  stronger topology $\tau_{\operatorname{\textsf{fc}}}^2$ on $\mathscr{I}_\lambda^2$ then $\tau_{\operatorname{\textsf{c}}}^2$ in the following way. By $\pi\colon\lambda\to\mathscr{I}_\lambda^2\colon a\mapsto\varepsilon_a$ we denote the map which assigns to any element $a\in\lambda$ the identity partial map $\varepsilon_a\colon\{a\}\to\{a\}$. Fix an arbitrary infinite subset $A$ of $\lambda$. For every non-zero element $x\in\mathscr{I}_\lambda^2$ we assume that the base $\mathscr{B}_{\operatorname{\textsf{fc}}}^2(x)$ of the topology $\tau_{\operatorname{\textsf{fc}}}^2$ at the point $x$ coincides with the base of the topology $\tau_{\operatorname{\textsf{c}}}^2$ at $x$, and
\begin{equation*}
\begin{split}
  \mathscr{B}_{\operatorname{\textsf{fc}}}^2(0) & =\left\{U_B(\operatorname{\textbf{0}})=U(\operatorname{\textbf{0}})\setminus\left((B)\pi\cup \left\{\alpha_1,\ldots,\alpha_s\right\}\right)\colon U(0)\in\mathscr{B}_{\operatorname{\textsf{c}}}^2(0), \alpha_1,\ldots,\alpha_s\in\mathscr{I}_\lambda^2\setminus\{\operatorname{\textbf{0}}\} \right.\\
    & \quad \left. \hbox{~and~} B\subseteq\lambda \hbox{~such that~} \left|A\triangle B\right|<\infty \right\}
\end{split}
\end{equation*}
form a base of the topology $\tau_{\operatorname{\textsf{fc}}}^2$ at zero $\operatorname{\textbf{0}}$ of the semigroup $\mathscr{I}_\lambda^2$. Simple verifications show that the family $\left\{\mathscr{B}_{\operatorname{\textsf{fc}}}^2(x)\colon x\in\mathscr{I}_\lambda^2\right\}$ satisfies conditions \textbf{(BP1)--(BP4)} of \cite{Engelking-1989}, and hence $\tau_{\operatorname{\textsf{fc}}}^2$ is a Hausdorff topology on $\mathscr{I}_\lambda^2$.
\end{example}

\begin{proposition}\label{proposition-2.7}
Let $\lambda$ be an arbitrary infinite cardinal. Then \emph{$\left(\mathscr{I}_\lambda^2,\tau_{\operatorname{\textsf{fc}}}^2\right)$} is a countably pracompact semitopological semigroup with continuous inversion.
\end{proposition}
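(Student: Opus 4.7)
The proof splits into three checks: separate continuity of the semigroup operation, continuity of the inversion, and countable pracompactness of $(\mathscr{I}_\lambda^2,\tau_{\operatorname{\textsf{fc}}}^2)$. The simplifying observation is that $\tau_{\operatorname{\textsf{fc}}}^2$ and $\tau_{\operatorname{\textsf{c}}}^2$ share the same local base at every non-zero point, so all non-trivial issues sit at the zero $\operatorname{\textbf{0}}$. I will verify continuity for the left translation $L_\alpha\colon\beta\mapsto\alpha\beta$; the right translation is handled symmetrically, and the inversion at the end.

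For separate continuity the only non-routine case is $\alpha\beta=\operatorname{\textbf{0}}$ with a $\tau_{\operatorname{\textsf{fc}}}^2$-basic neighbourhood $U=U_c\setminus((B)\pi\cup\{\alpha_1,\ldots,\alpha_s\})$ of the target. The key computation is a minimal-preimage identity: for a non-zero $\delta\in\mathscr{I}_\lambda^2$ sending $c_i\mapsto d_i$ ($i=1,\ldots,m$), the equation $\alpha\gamma=\delta$ forces each $c_i\in\operatorname{dom}\alpha$ and $(c_i\alpha)\gamma=d_i$, so if we let $\gamma_0^\delta$ denote the partial bijection $c_i\alpha\mapsto d_i$ then $L_\alpha^{-1}(\delta)\subseteq{\uparrow}_\preccurlyeq\gamma_0^\delta$. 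Hence $L_\alpha^{-1}((B)\pi\cup\{\alpha_1,\ldots,\alpha_s\})$ is contained in at most $|B\cap\operatorname{dom}\alpha|+s$ clopen cones of the form ${\uparrow}_\preccurlyeq\gamma_0^\delta$, where clopen-ness in $\tau_{\operatorname{\textsf{c}}}^2$ follows from Lemma~\ref{lemma-2.2} and transfers to the stronger topology $\tau_{\operatorname{\textsf{fc}}}^2$. Subtracting these finitely many cones from a $\tau_{\operatorname{\textsf{c}}}^2$-preimage $L_\alpha^{-1}(U_c)$ (open by continuity of $L_\alpha$ with respect to $\tau_{\operatorname{\textsf{c}}}^2$) produces a $\tau_{\operatorname{\textsf{fc}}}^2$-open set $V$ with $\alpha V\subseteq U$; since $\alpha\beta=\operatorname{\textbf{0}}$ forces $\gamma_0^\delta\not\preccurlyeq\beta$ for every non-zero $\delta$, the point $\beta$ itself (whether $\operatorname{\textbf{0}}$ or not) belongs to $V$, so $V$ is the required neighbourhood.

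For the inversion, $(\mathscr{I}_\lambda^2,\tau_{\operatorname{\textsf{c}}}^2)$ already has continuous inversion by Example~\ref{example-2.4}, and the additional exclusion block of $\mathscr{B}_{\operatorname{\textsf{fc}}}^2(\operatorname{\textbf{0}})$ is stable under inversion: the idempotency $\varepsilon_b^{-1}=\varepsilon_b$ gives $((B)\pi)^{-1}=(B)\pi$, while $\{\alpha_1,\ldots,\alpha_s\}^{-1}$ is still a finite set of non-zero elements. For countable pracompactness I take the dense witness $D:=\{\gamma\in\mathscr{I}_\lambda^2\colon\operatorname{rank}\gamma=2\}$. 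Density is immediate: $(B)\pi$ contains no rank-$2$ element, any finite union $\bigcup_i{\uparrow}_\preccurlyeq\zeta_i$ leaves $\lambda^2$ many rank-$2$ points untouched, and each rank-$1$ element has $\lambda$ rank-$2$ successors. Given an infinite $S\subseteq D$, I split into two cases: if some rank-$1$ element $\eta$ is $\preccurlyeq$-below infinitely many members of $S$, then $\eta$ accumulates $S$ because each $\tau_{\operatorname{\textsf{c}}}^2$-neighbourhood of $\eta$ retains cofinitely many rank-$2$ successors; otherwise each basic $\tau_{\operatorname{\textsf{fc}}}^2$-neighbourhood of $\operatorname{\textbf{0}}$ excises from $S$ only at most one point per rank-$2$ $\zeta_i$, finitely many per rank-$1$ $\zeta_i$ (by the case hypothesis), none from $(B)\pi$ (rank mismatch), and finitely many from the puncture $\{\alpha_1,\ldots,\alpha_s\}$, so $\operatorname{\textbf{0}}$ becomes the accumulation point.

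The main obstacle is establishing the minimal-preimage inclusion $L_\alpha^{-1}(\delta)\subseteq{\uparrow}_\preccurlyeq\gamma_0^\delta$ that underpins the continuity step; once it is in hand, everything else reduces to finite bookkeeping layered on Lemmas~\ref{lemma-2.1} and~\ref{lemma-2.2}.
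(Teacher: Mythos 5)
Your proof is correct, and it reaches the proposition by a cleaner decomposition than the paper's own argument. The paper checks separate continuity through eight explicit cases, building ad hoc finite exclusion sets $\mathcal{C}_l$, $\mathcal{C}_r$, $\mathcal{Q}$ of ``minimal'' solutions of equations like $\alpha\gamma=\alpha_i$; you instead observe that every case with non-zero product is already settled by Proposition~10 of \cite{Gutik-Reiter-2010}, since the local bases at non-zero points are unchanged and $\tau_{\operatorname{\textsf{fc}}}^2\supseteq\tau_{\operatorname{\textsf{c}}}^2$, and you treat all zero-product cases at once via the minimal-preimage inclusion $L_\alpha^{-1}(\delta)\subseteq{\uparrow}_\preccurlyeq\gamma_0^\delta$ --- which is exactly the idea behind the paper's $\mathcal{C}_l$ and $\mathcal{C}_r$, applied uniformly. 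This buys you some extra safety: the paper's case \textbf{(8)} asserts $U_{\alpha}(\alpha_1,\ldots,\alpha_k)\cdot\beta=\{\operatorname{\textbf{0}}\}$, which is not literally true when $\operatorname{rank}\alpha=1$ (a rank-two extension of $\alpha$ can meet $\operatorname{dom}\beta$), whereas your cone subtraction handles that situation correctly; conversely, the paper's trick of also excluding all $\gamma$ with $\alpha\gamma$ a non-zero idempotent produces a neighbourhood of $\operatorname{\textbf{0}}$ independent of $B$, which is marginally more economical. On countable pracompactness you use the same dense witness $\mathscr{I}_\lambda^2\setminus\mathscr{I}_\lambda^1$, but your two-case analysis (either some rank-one $\eta$ lies below infinitely many points of $S$ and accumulates them, or else $\operatorname{\textbf{0}}$ does, since a basic neighbourhood of $\operatorname{\textbf{0}}$ excludes only finitely many points of $S$) actually proves what the paper only asserts in one sentence. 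Two small steps deserve a line in a final write-up: the monotonicity fact behind ``$\alpha\beta=\operatorname{\textbf{0}}$ forces $\gamma_0^\delta\not\preccurlyeq\beta$'' (if $\gamma_0^\delta\preccurlyeq\beta$ then $\delta=\alpha\gamma_0^\delta\preccurlyeq\alpha\beta=\operatorname{\textbf{0}}$, a contradiction), and the remark that the cones ${\uparrow}_\preccurlyeq\gamma_0^\delta$ are closed as well as open by Lemma~\ref{lemma-2.2}, so subtracting finitely many of them from $L_\alpha^{-1}(U_c)$ keeps the set open; both are immediate.
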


\begin{proof}
It is obvious that the inversion in $\left(\mathscr{I}_\lambda^2,\tau_{\operatorname{\textsf{fc}}}^2\right)$ is continuous and later we shall show that all translations in $\left(\mathscr{I}_\lambda^2,\tau_{\operatorname{\textsf{fc}}}^2\right)$ are continuous maps. We consider the following possible cases.

\textbf{(1)} $\operatorname{\textbf{0}}\cdot \operatorname{\textbf{0}}=\operatorname{\textbf{0}}$. For every basic open neighbourhood $U_B(\operatorname{\textbf{0}})$ of zero in $\left(\mathscr{I}_\lambda^2,\tau_{\operatorname{\textsf{fc}}}^2\right)$ we have that
\begin{equation*}
  U_B(\operatorname{\textbf{0}})\cdot \operatorname{\textbf{0}}=\operatorname{\textbf{0}}\cdot\, U_B(\operatorname{\textbf{0}})= \{\operatorname{\textbf{0}}\}\subset  U_\pi(\operatorname{\textbf{0}}).
\end{equation*}

\textbf{(2)} $\alpha\cdot \operatorname{\textbf{0}}=\operatorname{\textbf{0}}$. For all basic open neighbourhoods $U_B(\operatorname{\textbf{0}})$ and $U_\alpha(\beta_1,\ldots,\beta_k)$ of zero and an element $\alpha\neq\operatorname{\textbf{0}}$ in $\left(\mathscr{I}_\lambda^2,\tau_{\operatorname{\textsf{fc}}}^2\right)$, respectively, we have that
\begin{equation*}
  U_\alpha(\beta_1,\ldots,\beta_k) \cdot \operatorname{\textbf{0}}=\{\operatorname{\textbf{0}}\}\subset U_B(\operatorname{\textbf{0}}).
\end{equation*}
Let $V_B(\operatorname{\textbf{0}})= \mathscr{I}_\lambda^2\setminus({\uparrow}_{\preccurlyeq}\alpha_1\cup\cdots\cup{\uparrow}_{\preccurlyeq}\alpha_k\cup(B)\pi)$ be an arbitrary basic neighbourhood of zero in $\left(\mathscr{I}_\lambda^2,\tau_{\operatorname{\textsf{fc}}}^2\right)$. Without loss of generality we may assume that
\begin{equation*}
\operatorname{rank}\alpha_1=\ldots=\operatorname{rank}\alpha_k=1\leqslant\operatorname{rank}\alpha.
\end{equation*}
 Put
\begin{equation*}
\begin{split}
  \mathcal{C}_l=\left\{\gamma\in\mathscr{I}_\lambda^2 \right. \colon & \operatorname{rank}\gamma=1 \; \hbox{such that}\; \alpha\gamma=\alpha_i \hbox{~for some~} i=1,\ldots,k \\
    & \left. \hbox{or} \; \alpha\gamma\in E(\mathscr{I}_\lambda^2)\setminus\{\operatorname{\textbf{0}}\}\right\}.
\end{split}
\end{equation*}
The definition of the semigroup $\mathscr{I}_\lambda^2$ implies that the set $\mathcal{C}_l$ is finite. Then we have that $\alpha\cdot W_B(0)\subseteq V_B(0)$ for $W_B(0)=\mathscr{I}_\lambda^2\setminus\bigcup\left\{{\uparrow}_{\preccurlyeq}\gamma\colon \gamma\in\mathcal{C}_l\right\}$.

\textbf{(3)} $ \operatorname{\textbf{0}}\cdot\alpha=\operatorname{\textbf{0}}$. For all basic open neighbourhoods $U_B(\operatorname{\textbf{0}})$ and $U_\alpha(\beta_1,\ldots,\beta_k)$ of zero and an element $\alpha\neq\operatorname{\textbf{0}}$ in $\left(\mathscr{I}_\lambda^2,\tau_{\operatorname{\textsf{fc}}}^2\right)$, respectively, we have that
\begin{equation*}
  \operatorname{\textbf{0}}\cdot U_\alpha(\beta_1,\ldots,\beta_k)=\{\operatorname{\textbf{0}}\}\subset U_B(\operatorname{\textbf{0}}).
\end{equation*}
Let $V_B(\operatorname{\textbf{0}})= \mathscr{I}_\lambda^2\setminus({\uparrow}_{\preccurlyeq}\alpha_1\cup\cdots\cup{\uparrow}_{\preccurlyeq}\alpha_k\cup(B)\pi)$ be an arbitrary basic neighbourhood of zero in $\left(\mathscr{I}_\lambda^2,\tau_{\operatorname{\textsf{fc}}}^2\right)$. Without loss of generality we may assume that
\begin{equation*}
\operatorname{rank}\alpha_1=\ldots=\operatorname{rank}\alpha_k=1\leqslant\operatorname{rank}\alpha.
\end{equation*}
Put
\begin{equation*}
\begin{split}
  \mathcal{C}_r=\left\{\gamma\in\mathscr{I}_\lambda^2\colon\right. & \operatorname{rank}\gamma=1 \; \hbox{such that}\; \gamma\alpha=\alpha_i \hbox{~for some~} i=1,\ldots,k \; \\
    & \left. \hbox{or} \; \gamma\alpha\in E(\mathscr{I}_\lambda^2)\setminus\{\operatorname{\textbf{0}}\}\right\}.
\end{split}
\end{equation*}
The definition of the semigroup $\mathscr{I}_\lambda^2$ implies that the set $\mathcal{C}_r$ is finite. Then we have that $W_B(0)\cdot \alpha\subseteq V_B(0)$ for $W_B(0)=\mathscr{I}_\lambda^2\setminus\bigcup\left\{{\uparrow}_{\preccurlyeq}\gamma\colon \gamma\in\mathcal{C}_r\right\}$.

\textbf{(4)} $\alpha\cdot\beta=\gamma\neq\textbf{0}$ and $\operatorname{rank}\alpha=\operatorname{rank}\beta=\operatorname{rank}\gamma$, i.e.,
$\operatorname{ran}\alpha=\operatorname{dom}\beta$. Then for any open neighbourhoods $U_{\alpha}(\alpha_1,\ldots,\alpha_k)$,
$U_{\beta}(\beta_1,\ldots,\beta_n)$, $U_{\gamma}(\gamma_1,\ldots,\gamma_m)$ of the points $\alpha,\beta$ and $\gamma$ in $\left(\mathscr{I}_\lambda^2,\tau_{\operatorname{\textsf{fc}}}^2\right)$, respectively, we have that
\begin{equation*}
    U_{\alpha}(\alpha_1,\ldots,\alpha_k)\cdot\beta=
    \alpha\cdot U_{\beta}(\beta_1,\ldots,\beta_n)=
    \{\gamma\}\subseteq
    U_{\gamma}(\gamma_1,\ldots,\gamma_m).
\end{equation*}

\textbf{(5)} $\alpha\cdot\beta=\gamma\neq\textbf{0}$ and $\operatorname{rank}\alpha=\operatorname{rank}\gamma=1 $ and $\operatorname{rank}\beta=2$, i.e.,
$\operatorname{ran}\alpha\subsetneqq\operatorname{dom}\beta$. Then for any open neighbourhoods $U_{\beta}(\beta_1,\ldots,\beta_n)$ and
$U_{\gamma}(\gamma_1,\ldots,\gamma_m)$ of the points $\beta$ and $\gamma$ in $\left(\mathscr{I}_\lambda^2,\tau_{\operatorname{\textsf{fc}}}^2\right)$, respectively, we have that
\begin{equation*}
    \alpha\cdot U_{\beta}(\beta_1,\ldots,\beta_n)=
    \{\gamma\}\subseteq
    U_{\gamma}(\gamma_1,\ldots,\gamma_m).
\end{equation*}
Let $U_{\gamma}(\gamma_1,\ldots,\gamma_k)$ be an arbitrary open neighbourhood of the point $\gamma$ in $\left(\mathscr{I}_\lambda^2,\tau_{\operatorname{\textsf{fc}}}^2\right)$ for some $\gamma_1,\ldots,\gamma_k\in{\uparrow}_{\preccurlyeq}\gamma$, i.e., $\operatorname{rank}\gamma_1=\ldots=\operatorname{rank}\gamma_k=2$. Put
\begin{equation*}
\mathcal{Q}=\left\{\delta\in{\uparrow}_{\preccurlyeq}\alpha\colon \delta\beta\in\left\{\gamma_1,\ldots,\gamma_k\right\}\right\}.
\end{equation*}
The definition of the semigroup $\mathscr{I}_\lambda^2$ implies that the set $\mathcal{Q}$ is finite. Then we have that
\begin{equation*}
  U_{\alpha}(\mathcal{Q})\cdot\beta\subseteq U_{\gamma}(\gamma_1,\ldots,\gamma_k)
\end{equation*}
for $U_{\alpha}(\mathcal{Q})={\uparrow}_{\preccurlyeq}\alpha\setminus\left\{\delta\in{\uparrow}_{\preccurlyeq}\alpha\colon \delta\in\mathcal{Q}\right\}$.

\textbf{(6)}  $\alpha\cdot\beta=\gamma\neq\textbf{0}$ and $\operatorname{rank}\beta=\operatorname{rank}\gamma=1$ and $\operatorname{rank}\alpha=2$,
i.e., $\operatorname{dom}\beta\subsetneqq\operatorname{ran}\alpha$. In this case the proof of separate continuity of the semigroup operation on $\left(\mathscr{I}_\lambda^2,\tau_{\operatorname{\textsf{fc}}}^2\right)$ is dual to case \textbf{(5)}.

\textbf{(7)} $\alpha\cdot\beta=\gamma\neq\textbf{0}$,  $\operatorname{rank}\gamma=1$ and $\operatorname{rank}\alpha=\operatorname{rank}\beta=2$.  Then $\alpha$ and $\beta$ are isolated points in $\left(\mathscr{I}_\lambda^2,\tau_{\operatorname{\textsf{fc}}}^2\right)$ and hence
\begin{equation*}
  \alpha\cdot \beta=\gamma\subseteq U_{\gamma}(\gamma_1,\ldots,\gamma_k),
\end{equation*}
for any basic open neighbourhood $U_{\gamma}(\gamma_1,\ldots,\gamma_k)$ of $\gamma$ in $\left(\mathscr{I}_\lambda^2,\tau_{\operatorname{\textsf{fc}}}^2\right)$.

\textbf{(8)} $\alpha\cdot\beta=\textbf{0}$. Then $\operatorname{dom}\beta\cap\operatorname{ran}\alpha=\varnothing$ and hence
\begin{equation*}
  U_{\alpha}(\alpha_1,\ldots,\alpha_k)\cdot\beta=
    \alpha\cdot U_{\beta}(\beta_1,\ldots,\beta_n)=\left\{\textbf{0}\right\}\subset U_B(\operatorname{\textbf{0}}),
\end{equation*}
for any basic open neighbourhoods $U_{\alpha}(\alpha_1,\ldots,\alpha_k)$, $U_{\beta}(\beta_1,\ldots,\beta_n)$ and $U_B(\operatorname{\textbf{0}})$ of $\alpha$, $\beta$ and zero $\textbf{0}$ in $\left(\mathscr{I}_\lambda^2,\tau_{\operatorname{\textsf{fc}}}^2\right)$, respectivelly.

Thus we have shown that the translations in $\left(\mathscr{I}_\lambda^2,\tau_{\operatorname{\textsf{fc}}}^2\right)$ are continuous maps.

Also, the definition of the topology $\tau_{\operatorname{\textsf{fc}}}^2$ on $\mathscr{I}_\lambda^2$ implies that the set $\mathscr{I}_\lambda^2\setminus\mathscr{I}_\lambda^1$ is dense in $\left(\mathscr{I}_\lambda^2,\tau_{\operatorname{\textsf{fc}}}^2\right)$ and every infinite subset of $\mathscr{I}_\lambda^2\mathscr{I}_\lambda^1$ has an accumulation point in $\left(\mathscr{I}_\lambda^2,\tau_{\operatorname{\textsf{fc}}}^2\right)$, and hence the space $\left(\mathscr{I}_\lambda^2,\tau_{\operatorname{\textsf{fc}}}^2\right)$ is countably pracompact.
\end{proof}

\begin{proposition}\label{proposition-2.8}
Let $n$ be an arbitrary positive integer and $\lambda$ be an arbitrary infinite cardinal. Then for every
$d$-feebly compact shift-continuous $T_1$-topology $\tau$ on $\mathscr{I}_\lambda^n$ the subset
$\mathscr{I}_\lambda^n\setminus\mathscr{I}_\lambda^{n-1}$ is dense in $\left(\mathscr{I}_\lambda^n,\tau\right)$.
\end{proposition}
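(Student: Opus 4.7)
The plan is to argue by contradiction: assuming $\mathscr{I}_\lambda^n\setminus\mathscr{I}_\lambda^{n-1}$ is not dense in $(\mathscr{I}_\lambda^n,\tau)$, I extract a non-empty open set $U\subseteq\mathscr{I}_\lambda^{n-1}$ and use it to exhibit an infinite discrete family of non-empty open subsets of $(\mathscr{I}_\lambda^n,\tau)$, thereby violating the $d$-feeble compactness of $\tau$.

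The first step is to produce an isolated point of rank strictly less than $n$. Since the ranks of the elements of $U$ lie in the finite set $\{0,1,\ldots,n-1\}$, I pick $\alpha\in U$ of maximum rank $k$. By Lemma~\ref{lemma-2.2} the set ${\uparrow}_{\preccurlyeq}\alpha$ is open-and-closed, so $U\cap{\uparrow}_{\preccurlyeq}\alpha$ is open; any $\gamma$ in this intersection both extends $\alpha$ and has rank at most $k$, hence $\gamma=\alpha$. This shows that $\{\alpha\}$ is open in $(\mathscr{I}_\lambda^n,\tau)$.

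Writing $D=\operatorname{dom}\alpha$ and $R=\operatorname{ran}\alpha$, I then use the infinitude of $\lambda$ to choose, for each $i\in\omega$, elements $a_1^i,\ldots,a_{n-k}^i,b_1^i,\ldots,b_{n-k}^i\in\lambda\setminus(D\cup R)$ in such a way that all of these entries are pairwise distinct as $(i,j)$ ranges over $\omega\times\{1,\ldots,n-k\}$. Setting $\beta_i=\alpha\cup\{(a_j^i,b_j^i):j=1,\ldots,n-k\}$ yields pairwise distinct rank-$n$ elements of $\mathscr{I}_\lambda^n$, each of which is $\preccurlyeq$-maximal, so that ${\uparrow}_{\preccurlyeq}\beta_i=\{\beta_i\}$ is open by Lemma~\ref{lemma-2.2}.

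Finally, I verify that $\{\{\beta_i\}\}_{i\in\omega}$ is a discrete family via a three-case analysis for an arbitrary $\gamma\in\mathscr{I}_\lambda^n$: if $\alpha\not\preccurlyeq\gamma$, then the clopen set $\mathscr{I}_\lambda^n\setminus{\uparrow}_{\preccurlyeq}\alpha$ is an open neighbourhood of $\gamma$ disjoint from every $\beta_i$ (since each $\beta_i\succcurlyeq\alpha$); if $\gamma=\alpha$, then $\{\alpha\}$ is such a neighbourhood; and if $\alpha\prec\gamma$, then $\gamma$ contains at least one pair $(a,b)$ with $a\in\operatorname{dom}\gamma\setminus D$, and the pairwise-distinctness of the entries $a_j^i,b_j^i$ forces this pair to belong to $\beta_i$ for at most one index $i$, so the open neighbourhood ${\uparrow}_{\preccurlyeq}\gamma$ of $\gamma$ meets $\{\beta_i\}$ for at most one $i$. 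The main subtlety I anticipate is the very first step of producing the isolated point $\{\alpha\}$, which rests on the maximal-rank trick combined with the clopen up-sets from Lemma~\ref{lemma-2.2}; once $\{\alpha\}$ is known to be open, the discreteness of the family is essentially forced by the choice of the $\beta_i$, and the resulting infinite discrete family of non-empty open subsets contradicts the assumed $d$-feeble compactness of $(\mathscr{I}_\lambda^n,\tau)$.
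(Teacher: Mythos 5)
Your argument is correct, and its core construction is the same one the paper uses: pick a point of maximal rank inside an open set missing $\mathscr{I}_\lambda^n\setminus\mathscr{I}_\lambda^{n-1}$ (the paper's choice of $\beta$ with ${\uparrow}_{\preccurlyeq}\beta\cap U(\alpha)=\{\beta\}$ is exactly your maximal-rank trick), then extend it by infinitely many pairwise disjoint blocks of fresh points of $\lambda$ to get infinitely many rank-$n$ elements above it, each an isolated point by Lemma~\ref{lemma-2.2}. Where you genuinely diverge is in the logical framing. The paper first converts the hypothesis: it invokes quasi-regularity (from Lemma~\ref{lemma-2.2}) together with the cited result that quasi-regular $d$-feebly compact spaces are feebly compact, passes to the clopen subspace ${\uparrow}_{\preccurlyeq}\beta$, which is feebly compact by the Bagley--Connell--McKnight theorem on clopen subspaces, and then shows the family $\left\{\{\beta_j\}\right\}$ is \emph{locally finite} there, contradicting feeble compactness. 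You instead keep the hypothesis as stated and verify, via your three-case analysis (using the clopen sets ${\uparrow}_{\preccurlyeq}\alpha$, $\{\alpha\}$ and ${\uparrow}_{\preccurlyeq}\gamma$), that your family is actually \emph{discrete} in the whole space, which contradicts $d$-feeble compactness directly. Your route is therefore more self-contained: it dispenses with the two external citations (the quasiregular-DFCC-implies-feebly-compact theorem and Theorem~14 of Bagley--Connell--McKnight) at the cost of proving the slightly stronger discreteness property, which your choice of pairwise distinct new entries $a_j^i,b_j^i$ makes essentially automatic. Both proofs are valid; the paper's reduction to feeble compactness is reused elsewhere in its arguments, while yours is the shorter path if one only wants this proposition.
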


\begin{proof}
Since every quasi-regular $d$-feebly compact space is feebly compact (see \cite[Theorem~2]{Gutik-Sobol-2016a}), by Lemma~\ref{lemma-2.2}  the topology $\tau$ is feebly compact.

Suppose to the contrary that there exists a feebly compact shift-continuous $T_1$-topology $\tau$ on $\mathscr{I}_\lambda^n$ such that
$\mathscr{I}_\lambda^n\setminus\mathscr{I}_\lambda^{n-1}$ is not dense in $\left(\mathscr{I}_\lambda^n,\tau\right)$. Then there exists a point
$\alpha\in\mathscr{I}_\lambda^{n-1}$ of the space $\left(\mathscr{I}_\lambda^n,\tau\right)$ such that $\alpha\notin\operatorname{cl}_{\mathscr{I}_\lambda^n}\left(\mathscr{I}_\lambda^n\setminus\mathscr{I}_\lambda^{n-1}\right)$. This implies that there exists an open neighbourhood $U(\alpha)$ of $\alpha$ in $\left(\mathscr{I}_\lambda^n,\tau\right)$ such that $U(\alpha)\cap \left(\mathscr{I}_\lambda^n\setminus\mathscr{I}_\lambda^{n-1}\right)=\varnothing$. Lemma~\ref{lemma-2.2} implies that ${\uparrow}_{\preccurlyeq}\alpha$ is an open-and-closed subset of $\left(\mathscr{I}_\lambda^n,\tau\right)$ and hence by Theorem~14 of~\cite{Bagley-Connell-McKnight-Jr-1958}, ${\uparrow}_{\preccurlyeq}\alpha$ is feebly compact. This implies that without loos of generality we may assume that $U(\alpha)\subseteq {\uparrow}_{\preccurlyeq}x\cap\mathscr{I}_\lambda^{n-1}$. By the definition of the semigroup $\mathscr{I}_\lambda^n$ we have that there exists a point $\beta\in U(\alpha)$ such that ${\uparrow}_\preccurlyeq\beta\cap U(\alpha)=\{\beta\}$. Again, by Lemma~\ref{lemma-2.2} we have that ${\uparrow}_{\preccurlyeq}\beta$ is an open-and-closed subset of $\left(\mathscr{I}_\lambda^n,\tau\right)$ and hence by Theorem~14 of~\cite{Bagley-Connell-McKnight-Jr-1958}, ${\uparrow}_{\preccurlyeq}\beta$ is feebly compact. Moreover, our choice implies that $\beta$ is an isolated point in the subspace ${\uparrow}_{\preccurlyeq}\beta$ of $\left(\mathscr{I}_\lambda^n,\tau\right)$.

Suppose that
\begin{equation*}
  \beta=
\left(
  \begin{array}{ccc}
    x_1 & \cdots & x_k \\
    y_1 & \cdots & y_k \\
  \end{array}
\right),
\end{equation*}
for some finite subsets $\left\{x_1,\cdots,x_k\right\}$ and $\left\{y_1,\cdots,y_k\right\}$ of distinct points from $\lambda$. Then the above arguments imply that $k<n$. Put $p=n-k$. Next we fix an arbitrary infinite sequence $\left\{a_i\right\}_{i\in\mathbb{N}}$ of distinct elements of the set $\lambda\setminus\left(\left\{x_1,\cdots,x_k\right\}\cup\left\{y_1,\cdots,y_k\right\}\right)$.

For arbitrary positive integer $j$ we put
\begin{equation*}
  \beta_j=
\left(
  \begin{array}{cccccc}
    x_1 & \cdots & x_k & a_{p(j-1)+1} & \cdots & a_{pj}\\
    y_1 & \cdots & y_k & a_{p(j-1)+1} & \cdots & a_{pj}\\
  \end{array}
\right).
\end{equation*}
Then $\beta_j\in\mathscr{I}_\lambda^n$ for any positive integer $j$. Moreover, we have that $\beta_j\in\mathscr{I}_\lambda^n\setminus\mathscr{I}_\lambda^{n-1}$ and $\beta_j\in{\uparrow}_{\preccurlyeq}\beta$ for any positive integer $j$.

We claim that the set ${\uparrow}_{\preccurlyeq}\gamma\cap\left\{\beta_j\colon j\in\mathbb{N}\right\}$  is finite for any $\gamma\in{\uparrow}_{\preccurlyeq}\beta\setminus\{\beta\}$. Indeed, if the set ${\uparrow}_{\preccurlyeq}\gamma\cap\left\{\beta_j\colon j\in\mathbb{N}\right\}$  is infinite for some $\gamma\in{\uparrow}_{\preccurlyeq}\beta\setminus\{\beta\}$ then $\operatorname{dom}\gamma$ contains infinitely many points of the set $\left\{a_i\colon i\in\mathbb{N}\right\}$, which contradicts that $\gamma\in\mathscr{I}_\lambda^n$.

By Lemma~\ref{lemma-2.2} for every $\gamma\in\mathscr{I}_\lambda^n$ the set ${\uparrow}_{\preccurlyeq}\gamma$ is open in $\left(\mathscr{I}_\lambda^n,\tau\right)$. Then since $\beta$ is an isolated point in ${\uparrow}_{\preccurlyeq}\beta$, our claim implies that the infinite family of isolated points $\mathscr{U}=\left\{\left\{b_j\right\}\colon j\in\mathbb{N}\right\}$ is locally finite in ${\uparrow}_{\preccurlyeq}\beta$, which contradicts that the subspace ${\uparrow}_{\preccurlyeq}\beta$ of $\left(\mathscr{I}_\lambda^n,\tau\right)$ is feebly compact. The obtained contradiction implies the statement of the proposition.
\end{proof}

\begin{remark}\label{remark-2.8a}
The following three examples of topological semigroups of matrix units $(B_\lambda,\tau_{mv})$, $(B_\lambda,\tau_{mh})$ and $(B_\lambda,\tau_{mi})$ from \cite{Gutik-Pavlyk-2005} imply that the converse to Proposition~\ref{proposition-2.8} is not true for any infinite cardinal $\lambda$.
\end{remark}

Later by $\mathbb{N}_{\mathfrak{d}}$ and $\mathbb{R}$ we denote the sets of positive integers with the discrete topology and the real numbers with the usual topology.

\begin{theorem}\label{theorem-2.9}
Let $n$ be an arbitrary positive integer and $\lambda$ be an arbitrary infinite cardinal. Then for every shift-continuous $T_1$-topology $\tau$ on the semigroup $\mathscr{I}_\lambda^n$ the following statements are equivalent:
\begin{itemize}
  \item[$(i)$] $\tau$ is countably pracompact;
  \item[$(ii)$] $\tau$ is feebly compact;
  \item[$(iii)$] $\tau$ is $d$-feebly compact;
  \item[$(iv)$] $\left(\mathscr{I}_\lambda^n,\tau\right)$ is H-closed;
  \item[$(v)$] $\left(\mathscr{I}_\lambda^n,\tau\right)$ is $\mathbb{N}_{\mathfrak{d}}$-compact;
  \item[$(vi)$] $\left(\mathscr{I}_\lambda^n,\tau\right)$ is $\mathbb{R}$-compact;
  \item[$(vii)$] $\left(\mathscr{I}_\lambda^n,\tau\right)$ is  infra H-closed.
\end{itemize}
\end{theorem}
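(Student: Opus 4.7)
I would establish the theorem by closing a web of implications anchored on the cycle $(i)\!\Rightarrow\!(ii)\!\Rightarrow\!(iii)\!\Rightarrow\!(i)$, connecting the image-compactness conditions $(v),(vi),(vii)$ to $(ii)$ and $(iii)$ through a single continuous-map construction, and treating $(ii)\!\Rightarrow\!(iv)$ separately. The implications $(i)\!\Rightarrow\!(ii)$, $(ii)\!\Rightarrow\!(iii)$, $(iv)\!\Rightarrow\!(ii)$, and $(ii)\!\Rightarrow\!(vii)$ are general topological facts recorded in the introduction. For $(iii)\!\Rightarrow\!(i)$, Lemma~\ref{lemma-2.2} yields quasi-regularity of $\left(\mathscr{I}_\lambda^n,\tau\right)$, so $d$-feebly compactness upgrades to feebly compactness (as already used in the proof of Proposition~\ref{proposition-2.8}), and Proposition~\ref{proposition-2.8} gives density of $D:=\mathscr{I}_\lambda^n\setminus\mathscr{I}_\lambda^{n-1}$. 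Rank-$n$ elements are $\preccurlyeq$-maximal, so Lemma~\ref{lemma-2.2} makes each singleton $\{\alpha\}$, $\alpha\in D$, open: the points of $D$ are isolated. An infinite subset $B\subseteq D$ without an accumulation point would form an infinite discrete family $\{\{b\}\colon b\in B\}$ of open singletons, contradicting $(iii)$, so $\left(\mathscr{I}_\lambda^n,\tau\right)$ is countably compact at the dense set $D$.

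A single construction resolves the three implications $(v),(vi),(vii)\!\Rightarrow\!(iii)$. Given an infinite discrete family $\{U_k\}_{k\in\mathbb{N}}$ of non-empty open sets, scatteredness (Corollary~\ref{corollary-2.2a}) supplies an isolated point $\alpha_k\in U_k$ in each $U_k$; the discreteness of $\{U_k\}$ transfers to $\{\{\alpha_k\}\}$, so $A:=\{\alpha_k\colon k\in\mathbb{N}\}$ is closed in $\left(\mathscr{I}_\lambda^n,\tau\right)$ and each $\{\alpha_k\}$ is clopen. Define
\[
f_1\colon\mathscr{I}_\lambda^n\to\mathbb{N}_{\mathfrak{d}},\ \ f_1(\alpha_k)=k,\ f_1|_{\mathscr{I}_\lambda^n\setminus A}\equiv 0,
\qquad f_2\colon\mathscr{I}_\lambda^n\to\mathbb{R},\ \ f_2(\alpha_k)=1-1/k,\ f_2|_{\mathscr{I}_\lambda^n\setminus A}\equiv 0.
\]
Both maps are continuous because $A$ and each $\{\alpha_k\}$ are clopen; the image of $f_1$ is $\{0\}\cup\mathbb{N}$ in $\mathbb{N}_{\mathfrak{d}}$ (infinite, hence not compact in a discrete space), while the image of $f_2$ is $\{0\}\cup\{1-1/k\colon k\in\mathbb{N}\}$ in $\mathbb{R}$ (bounded but not closed, so neither compact nor closed in the first countable Hausdorff target). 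Conversely, $(iii)\!\Rightarrow\!(v)$ follows because preimages of singletons under a continuous map into $\mathbb{N}_{\mathfrak{d}}$ form a discrete open family, while $(ii)\!\Rightarrow\!(vi)$ follows because feebly compactness bounds real-valued continuous functions (via the locally finite cover $\{f^{-1}((n,n+2))\colon n\in\mathbb{Z}\}$) and $(ii)\!\Rightarrow\!(vii)$ forces the image closed, hence compact.

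The principal obstacle is $(ii)\!\Rightarrow\!(iv)$: that feebly compactness implies H-closedness. My plan is to argue by contradiction. Supposing $\mathscr{I}_\lambda^n$ is densely embedded in a Hausdorff space $Y$ with a point $y\in Y\setminus\mathscr{I}_\lambda^n$, the trace filter $\mathcal{F}=\{V\cap\mathscr{I}_\lambda^n\colon V\text{ an open neighbourhood of }y\text{ in }Y\}$ is an open filter on $\mathscr{I}_\lambda^n$ whose members all meet $D$ (by density from Proposition~\ref{proposition-2.8}) and which admits no adherent point in $\mathscr{I}_\lambda^n$ (otherwise that point could not be $T_2$-separated from $y$ in $Y$). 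I would then inductively construct distinct points $x_k\in D$ together with a decreasing chain of open $Y$-neighbourhoods $V_k\ni y$ such that $V_k\cap\{x_1,\ldots,x_{k-1}\}=\varnothing$ and $x_k\in V_k\cap D$. Hausdorffness of $Y$ combined with the isolatedness of each $x_k$ should force $\{x_k\}$ to have no accumulation point in $\mathscr{I}_\lambda^n$, producing the infinite locally finite family $\{\{x_k\}\colon k\in\mathbb{N}\}$ of open singletons, contradicting feebly compactness. The delicate point is that $y$ need not admit a countable neighbourhood base in $Y$; I would address this by enriching the inductive choice of $V_k$ with countable cofinal data extracted from the clopen sets ${\uparrow}_{\preccurlyeq}\alpha$ supplied by Lemma~\ref{lemma-2.2}, ensuring that every $Y$-neighbourhood of $y$ traces onto $\mathscr{I}_\lambda^n$ in a way that is compatible with the constructed sequence.
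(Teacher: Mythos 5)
Most of your web of implications is sound, and in places more economical than the paper's (the unified map construction for $(v)$, $(vi)$, $(vii)$, and the direct route $(iii)\Rightarrow(i)$ via the dense set of isolated rank-$n$ points are fine, modulo the cosmetic slip that $0\notin\mathbb{N}_{\mathfrak{d}}$). But the implication $(ii)\Rightarrow(iv)$ --- the real content of the theorem --- has a genuine gap exactly at the point you flag. Your trace-filter argument produces a \emph{decreasing but not cofinal} chain $V_1\supseteq V_2\supseteq\cdots$ of neighbourhoods of $y$, and nothing prevents the chosen points $x_k\in V_k\cap D$ from accumulating at a point $z$ of $\mathscr{I}_\lambda^n$: separating $z$ from $y$ by disjoint open sets $W$ and $V$ gives no contradiction, because no $V_k$ need be contained in $V$. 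This is not a technicality that can be patched by ``enriching the inductive choice with countable cofinal data'': H-closedness is equivalent to every open filter having an adherent point, feeble compactness only controls \emph{countably based} open filters, and the gap between the two is genuine --- Mr\'owka's $\Psi$-space is a feebly compact Hausdorff space with a dense set of isolated points that is not H-closed. So any proof of $(ii)\Rightarrow(iv)$ must use the order/rank structure of $\mathscr{I}_\lambda^n$ in an essential way, and your sketch never actually does.

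The paper closes this gap by a downward induction on rank: it proves that ${\uparrow}_{\preccurlyeq}\alpha$ is H-closed for all $\alpha$, starting from ranks $n$ and $n-1$ (where the upper sets are finite or compact) and descending. In the inductive step for $\beta$ of rank $k-1$, the hypothesis that every ${\uparrow}_{\preccurlyeq}\alpha$ with $\operatorname{rank}\alpha=k$ is already H-closed --- hence closed in any Hausdorff superspace $X$ --- is what lets one assume that $U_X(x)\cap{\uparrow}_{\preccurlyeq}\beta$ is not covered by finitely many such upper sets; one then extracts a sequence of isolated points $\gamma_i\in\mathscr{I}_\lambda^n\setminus\mathscr{I}_\lambda^{n-1}$ lying in $U_X(x)$ but in pairwise ``new'' upper sets ${\uparrow}_{\preccurlyeq}\alpha_{i+1}\setminus({\uparrow}_{\preccurlyeq}\alpha_1\cup\cdots\cup{\uparrow}_{\preccurlyeq}\alpha_i)$, and the finiteness of ${\downarrow}_{\preccurlyeq}\gamma$ (Lemma~\ref{lemma-2.1}) together with disjointness from $U_X(\beta)$ forces this family to be locally finite, contradicting feeble compactness of ${\uparrow}_{\preccurlyeq}\beta$. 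You need to replace your filter sketch with an argument of this kind (or an equivalent use of the rank stratification); as written, $(ii)\Rightarrow(iv)$ is not proved, and since several of your other implications ($(iv)\Rightarrow(v)$ in the paper, or your $(ii)\Rightarrow(vi)$ which leans on closedness of images) sit downstream of the overall equivalence, the theorem does not yet follow from your proposal.
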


\begin{proof}
Implications $(i)\Rightarrow(ii)$ and $(ii)\Rightarrow(iii)$ are trivial.

$(iii)\Rightarrow(ii)$ Suppose that a space $\left(\mathscr{I}_\lambda^n,\tau\right)$ is $d$-feebly compact. By Lemma~\ref{lemma-2.2} it is quasi-regular. Then by Theorem~1 of \cite{Gutik-Sobol-2016a} every quasiregular $d$-feebly compact space is feebly compact and hence so is $\left(\mathscr{I}_\lambda^n,\tau\right)$.

$(ii)\Rightarrow(i)$ Suppose that a space $\left(\mathscr{I}_\lambda^n,\tau\right)$ is feebly compact. By Lemma~\ref{lemma-2.2} the topological space $\left(\mathscr{I}_\lambda^n,\tau\right)$ is Hausdorff. Then by Lemma~1 of \cite{Gutik-Sobol-2016a} every Hausdorff feebly compact space with a dense discrete subspace is countably pracompact (also see Lemma~4.5 of~\cite{Bardyla-Gutik-2016} or Proposition~1 from~\cite{Arkhangelskii-1985} for Tychonoff spaces) and hence so is $\left(\mathscr{I}_\lambda^n,\tau\right)$.

Implication $(iv)\Rightarrow(ii)$ follows from Proposition~4 of \cite{Gutik-Ravsky-2015a}.

$(ii)\Rightarrow(iv)$ We shall show by induction that if $\tau$ is a shift-continuous feebly compact $T_1$-topology on the semigroup $\mathscr{I}_\lambda^n$ then the subspace ${\uparrow}_{\preccurlyeq}\alpha$ of $\left(\mathscr{I}_\lambda^n,\tau\right)$ is H-closed for any $\alpha\in\mathscr{I}_\lambda^n$.

It is obvious that for any $\alpha\in\mathscr{I}_\lambda^n$ with $\operatorname{rank}\alpha=n$ the set ${\uparrow}_{\preccurlyeq}\alpha=\{\alpha\}$ is singleton, and since $\left(\mathscr{I}_\lambda^n,\tau\right)$ is a $T_1$-space, ${\uparrow}_{\preccurlyeq}\alpha$ is H-closed.

Fix an arbitrary $\alpha\in\mathscr{I}_\lambda^n$ with $\operatorname{rank}\alpha=n-1$. By Lemma~\ref{lemma-2.2}, ${\uparrow}_{\preccurlyeq}\alpha$ is an open-and-closed subset of $\left(\mathscr{I}_\lambda^n,\tau\right)$ and hence by Theorem~14 from \cite{Bagley-Connell-McKnight-Jr-1958} the space ${\uparrow}_{\preccurlyeq}\alpha$ is feebly compact. Since by Lemma~\ref{lemma-2.2} every point $\beta$ of ${\uparrow}_{\preccurlyeq}\alpha$ with $\operatorname{rank}\alpha=n$ is isolated in $\left(\mathscr{I}_\lambda^n,\tau\right)$, the feeble compactness of ${\uparrow}_{\preccurlyeq}\alpha$ implies that $\alpha$ is a non-isolated point of $\left(\mathscr{I}_\lambda^n,\tau\right)$ and the space ${\uparrow}_{\preccurlyeq}\alpha$ is compact. This implies that ${\uparrow}_{\preccurlyeq}\alpha$ is H-closed.

Next we shall prove the following statement: \emph{if for some positive integer $k<n$ for any $\alpha\in\mathscr{I}_\lambda^n$ with $\operatorname{rank}\alpha\leqslant k$ the subspace ${\uparrow}_{\preccurlyeq}\alpha$ is H-closed then ${\uparrow}_{\preccurlyeq}\beta$ is H-closed for any $\beta\in\mathscr{I}_\lambda^n$ with $\operatorname{rank}\beta=k-1$}.

Suppose to the contrary that there exists a shift-continuous feebly compact $T_1$-topology $\tau$ on the semigroup $\mathscr{I}_\lambda^n$ such that for some positive integer $k<n$ for any $\alpha\in\mathscr{I}_\lambda^n$ with $\operatorname{rank}\alpha=k$ the subspace ${\uparrow}_{\preccurlyeq}\alpha$ is H-closed and ${\uparrow}_{\preccurlyeq}\beta$ is not an H-closed space for some $\beta\in\mathscr{I}_\lambda^n$ with $\operatorname{rank}\beta=k-1$. Then there exists a Hausdorff topological space $X$ which contains the space ${\uparrow}_{\preccurlyeq}\beta$ as a dense proper subspace. We observe that by Lemma~\ref{lemma-2.2} and Theorem~14 of \cite{Bagley-Connell-McKnight-Jr-1958} the space ${\uparrow}_{\preccurlyeq}\beta$ is feebly compact.

Fix an arbitrary $x\in X\setminus{\uparrow}_{\preccurlyeq}\beta$. The Hausdorffness of $X$ implies that there exist open neighbourhoods $U_X(x)$ and $U_X(\beta)$ of the points $x$ and $\beta$ in $X$, respectively, such that $U_X(x)\cap U_X(\beta)=\varnothing$. Then the assumption of induction implies that without loss of generality we may assume that there do not exist finitely many $\alpha_1,\ldots,\alpha_m\in {\uparrow}_{\preccurlyeq}\beta$ with $\operatorname{rank}\alpha_1=\ldots=\operatorname{rank}\alpha_m=k$ such that
\begin{equation*}
U_X(x)\cap{\uparrow}_{\preccurlyeq}\beta\subseteq {\uparrow}_{\preccurlyeq}\alpha_1\cup\cdots\cup{\uparrow}_{\preccurlyeq}\alpha_m.
\end{equation*}

Fix an arbitrary $\alpha_1\in {\uparrow}_{\preccurlyeq}\beta$ such that $\operatorname{rank}\alpha_1=k$ and ${\uparrow}_{\preccurlyeq}\alpha_1\cap U_X(x)\neq\varnothing$. Proposition~1.3.1 of \cite{Engelking-1989}, Lemma~\ref{lemma-2.2} and Proposition~\ref{proposition-2.8} imply that there exists $\gamma_1\in\mathscr{I}_\lambda^n\setminus\mathscr{I}_\lambda^{n-1}$ such that $\gamma_1\in{\uparrow}_{\preccurlyeq}\alpha_1\cap U_X(x)$. Next, by induction using Proposition~1.3.1 of \cite{Engelking-1989}, Lemma~\ref{lemma-2.2} and Proposition~\ref{proposition-2.8} we construct sequences $\left\{\alpha_i\right\}_{i\in\mathbb{N}}$ and $\left\{\gamma_i\right\}_{i\in\mathbb{N}}$ of distinct points of the set ${\uparrow}_{\preccurlyeq}\beta$ such that the following conditions hold:
\begin{itemize}
  \item[$(a)$] $\operatorname{rank}\alpha_{i+1}=k$ and ${\uparrow}_{\preccurlyeq}\alpha_{i+1}\setminus\left({\uparrow}_{\preccurlyeq}\alpha_1\cup \cdots\cup {\uparrow}_{\preccurlyeq}\alpha_{i}\right)\cap U_X(x)\neq\varnothing$; \: and
  \item[$(b)$] $\gamma_{i+1}\in\mathscr{I}_\lambda^n\setminus\mathscr{I}_\lambda^{n-1}$ and $\gamma_{i+1}\in {\uparrow}_{\preccurlyeq}\alpha_{i+1}\setminus\left({\uparrow}_{\preccurlyeq}\alpha_1\cup \cdots\cup {\uparrow}_{\preccurlyeq}\alpha_{i}\right)\cap U_X(x)$,
\end{itemize}
for all positive integers $i>1$.

Then Lemma~\ref{lemma-2.1} implies that the infinite family of non-empty open subsets $\mathscr{U}=\left\{\left\{\gamma_i\right\}\colon i\in\mathbb{N}\right\}$ is locally finite, which contradicts the feeble compactness of ${\uparrow}_{\preccurlyeq}\beta$. The obtained contradiction implies the statement of induction which completes the proof of the statement that the space $\left(\mathscr{I}_\lambda^n,\tau\right)$ is H-closed.

$(iv)\Rightarrow(v)$ By Kat\v{e}tov's Theorem every continuous image of an H-closed topological space into a Hausdorff space is H-closed (see \cite[3.15.5~(b)]{Engelking-1989} or \cite{Katetov-1940}). Hence the image $f(\mathscr{I}_\lambda^n)$ is H-closed for every continuous map $f\colon \left(\mathscr{I}_\lambda^n,\tau\right)\to\mathbb{N}_{\mathfrak{d}}$, which implies that $f(\mathscr{I}_\lambda^n)$ is compact (see \cite[3.15.5~(a)]{Engelking-1989}).

$(v)\Rightarrow(ii)$ Suppose to the contrary that there exists a Hausdorff shift-continuous $\mathbb{N}_{\mathfrak{d}}$-compact topology $\tau$ on  $\mathscr{I}_\lambda^n$ which is not feebly compact. Then there exists an infinite locally finite family $\mathscr{U}=\{U_i\}$ of open non-empty subsets of $\left(\mathscr{I}_\lambda^n,\tau\right)$. Without loss of generality we may assume that the family $\mathscr{U}=\{U_i\}$ is countable., i.e., $\mathscr{U}=\{U_i\colon i\in\mathbb{N}\}$. Then the definition of the semigroup $\mathscr{I}_\lambda^n$ and Lemma~\ref{lemma-2.2} imply that for every $U_i\in\mathscr{U}$ there exists $\alpha_i\in U_i$ such that ${\uparrow}_{\preccurlyeq}\alpha_i\cap U_i=\{\alpha_i\}$ and hence $\mathscr{U}^*=\{\{\alpha_i\}\colon i\in\mathbb{N}\}$ is a family of isolated points of $\left(\mathscr{I}_\lambda^n,\tau\right)$. Since the family $\mathscr{U}$ is locally finite, without loss of generality we may assume that $\alpha_i\neq\alpha_j$ for distinct $i,j\in\mathbb{N}$. We claim that the family $\mathscr{U}^*$ is locally finite. Indeed, if we assume the contrary then there exists $\alpha\in \mathscr{I}_\lambda^n$ such that every open neighbourhood of $\alpha$ contains infinitely many elements of the family $\mathscr{U}^*$. This implies that the family $\mathscr{U}$ is not locally finite, a contradiction. Since  $\left(\mathscr{I}_\lambda^n,\tau\right)$ is a $T_1$-space and the family $\mathscr{U}^*$ is locally finite, we have that $\bigcup\mathscr{U}^*$ is a closed subset in $\left(\mathscr{I}_\lambda^n,\tau\right)$ and hence the map $f\colon \left(\mathscr{I}_\lambda^n,\tau\right)\to \mathbb{N}_{\mathfrak{d}}$ defined by the formula
\begin{equation*}
  f(\beta)=
\left\{
  \begin{array}{cl}
    1, & \hbox{if~} \beta\in\mathscr{I}_\lambda^n\setminus\bigcup\mathscr{U}^*;\\
    i+1, & \hbox{if~} \beta=\alpha_i \hbox{~for some~} i\in\mathbb{N},
  \end{array}
\right.
\end{equation*}
is continuous. This contradicts that the space $\left(\mathscr{I}_\lambda^n,\tau\right)$ is $\mathbb{N}_{\mathfrak{d}}$-compact.

The proofs of implications $(iv)\Rightarrow(vi)$ and $(vi)\Rightarrow(ii)$ are same as the proofs of $(iv)\Rightarrow(v)$ and $(v)\Rightarrow(ii)$, respectively.

Implication $(ii)\Rightarrow(vii)$ follows from Proposition 2 and Theorem 3 of \cite{Hajek-Todd-1975}.

$(vii)\Rightarrow(ii)$ Suppose to the contrary that there exists a Hausdorff shift-continuous infra H-closed topology $\tau$ on $\mathscr{I}_\lambda^n$ which is not feebly compact. Then similarly as in the proof of implication $(v)\Rightarrow(ii)$ we choice a locally finite family $\mathscr{U}^*=\{\{\alpha_i\}\colon i\in\mathbb{N}\}$ of isolated points of $\left(\mathscr{I}_\lambda^n,\tau\right)$. Then the map $f\colon \left(\mathscr{I}_\lambda^n,\tau\right)\to \mathbb{R}$ defined by the formula
\begin{equation*}
  f(\beta)=
\left\{
  \begin{array}{cl}
    1, & \hbox{if~} \beta\in\mathscr{I}_\lambda^n\setminus\bigcup\mathscr{U}^*;\\
    \dfrac{1}{i+1}, & \hbox{if~} \beta=\alpha_i \hbox{~for some~} i\in\mathbb{N},
  \end{array}
\right.
\end{equation*}
is continuous. This contradicts that the space $\left(\mathscr{I}_\lambda^n,\tau\right)$ is infra H-closed.
\end{proof}

\begin{remark}\label{remark-2.9a}
By Theorem~5 from \cite{Hajek-Todd-1975} conditions $(ii)$ and $(vii)$ of Theorem~\ref{theorem-2.9} are equivalent for any Tychonoff space $X$.

It is not, however, the case that feebly compact and infra H-closed are equivalent in general. In \cite{Herrlich-1965} Herrlich, beginning with a $T_1$-space $Y$, constructs a regular space $X$ such that the only continuous functions from $X$ into $Y$ are constant. His construction involves the cardinality of $Y$, but only as the cardinality of collections of open sets whose intersections are singletons. Thus only the most trivial modifications are needed in his argument to produce a regular Hausdorff infra H-closed space. It is also easily shown that the space constructed in this manner is not feebly compact.

Any regular lightly compact space must be a Baire space \cite[Lemma~3]{McCoy-1973}, and thus it is of interest to note that the space constructed in \cite{Herrlich-1965} can be shown to be the countable union of nowhere dense subsets using essentially the same argument as can be used to show it is not feebly compact.
\end{remark}

Later we need the following technical lemma.

\begin{lemma}\label{lemma-2.10}
Let $n$ be an arbitrary positive integer and $\lambda$ be an arbitrary infinite cardinal. Let $\tau$ be a feebly compact shift-continuous $T_1$-topology on the semigroup $\mathscr{I}_\lambda^n$. Then for every $\alpha\in\mathscr{I}_\lambda^n$ and any open neighbourhood $U(\alpha)$ of $\alpha$ in $\left(\mathscr{I}_\lambda^n,\tau\right)$ there exist finitely many $\alpha_1,\ldots,\alpha_k\in{\uparrow}_{\preccurlyeq}\alpha\setminus\{\alpha\}$ such that
\begin{equation*}
  \mathscr{I}_\lambda^n\setminus\mathscr{I}_\lambda^{n-1}\cap {\uparrow}_{\preccurlyeq}\alpha \subseteq U(\alpha)\cup{\uparrow}_{\preccurlyeq}\alpha_1\cup \cdots\cup {\uparrow}_{\preccurlyeq}\alpha_k.
\end{equation*}
\end{lemma}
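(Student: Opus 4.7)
The idea is to deduce the finite cover from the H-closedness of the clopen subspace ${\uparrow}_{\preccurlyeq}\alpha$ combined with the fact that all rank-$n$ elements of $\mathscr{I}_\lambda^n$ are isolated points of $\left(\mathscr{I}_\lambda^n,\tau\right)$.

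I would first recall that ${\uparrow}_{\preccurlyeq}\alpha$ is clopen by Lemma~\ref{lemma-2.2}, and that the inductive argument in the proof of the implication $(ii)\Rightarrow(iv)$ of Theorem~\ref{theorem-2.9} shows that the subspace ${\uparrow}_{\preccurlyeq}\alpha$ is H-closed. I would then invoke the Aleksandrov--Urysohn characterization: a Hausdorff space $X$ is H-closed if and only if every open cover of $X$ contains a finite subfamily whose closures cover $X$ (see~\cite{Engelking-1989}). Apply this to the open cover
\begin{equation*}
\mathscr{V}=\bigl\{U(\alpha)\cap{\uparrow}_{\preccurlyeq}\alpha\bigr\}\cup\bigl\{{\uparrow}_{\preccurlyeq}\beta\colon\beta\in{\uparrow}_{\preccurlyeq}\alpha\setminus\{\alpha\}\bigr\}
\end{equation*}
of ${\uparrow}_{\preccurlyeq}\alpha$, which is indeed a cover since $\alpha$ lies in the first member while every other $\beta\in{\uparrow}_{\preccurlyeq}\alpha$ lies in its own cone. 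Each ${\uparrow}_{\preccurlyeq}\beta$ is closed by Lemma~\ref{lemma-2.2} and hence coincides with its own closure, so I extract finitely many $\alpha_1,\ldots,\alpha_k\in{\uparrow}_{\preccurlyeq}\alpha\setminus\{\alpha\}$ satisfying
\begin{equation*}
\operatorname{cl}\bigl(U(\alpha)\cap{\uparrow}_{\preccurlyeq}\alpha\bigr)\cup{\uparrow}_{\preccurlyeq}\alpha_1\cup\cdots\cup{\uparrow}_{\preccurlyeq}\alpha_k\supseteq{\uparrow}_{\preccurlyeq}\alpha.
\end{equation*}

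Finally I would extract the required inclusion from the isolatedness of rank-$n$ elements. Fix $\gamma\in\left(\mathscr{I}_\lambda^n\setminus\mathscr{I}_\lambda^{n-1}\right)\cap{\uparrow}_{\preccurlyeq}\alpha$. Then $\operatorname{rank}\gamma=n$ forces ${\uparrow}_{\preccurlyeq}\gamma=\{\gamma\}$, and by Lemma~\ref{lemma-2.2} the singleton $\{\gamma\}$ is clopen, so $\gamma$ is an isolated point of $\left(\mathscr{I}_\lambda^n,\tau\right)$. The displayed inclusion therefore places $\gamma$ either inside some ${\uparrow}_{\preccurlyeq}\alpha_j$ or inside $\operatorname{cl}\bigl(U(\alpha)\cap{\uparrow}_{\preccurlyeq}\alpha\bigr)$; in the second case the open neighbourhood $\{\gamma\}$ must meet $U(\alpha)\cap{\uparrow}_{\preccurlyeq}\alpha$, which forces $\gamma\in U(\alpha)$. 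In either case $\gamma\in U(\alpha)\cup\bigcup_{j=1}^k{\uparrow}_{\preccurlyeq}\alpha_j$, which is the desired inclusion. The only subtlety I anticipate is recalling the closure-cover reformulation of H-closedness, which is not quoted explicitly earlier in the paper; once that characterization is in hand, the argument is essentially a one-shot application of it, made possible by the fact that maximal-rank elements absorb closure.
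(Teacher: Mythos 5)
Your argument is correct, but it is a genuinely different proof from the one in the paper. The paper argues directly from feeble compactness: assuming the conclusion fails for some $\alpha$ and $U(\alpha)$, it recursively builds sequences $\{\alpha_i\}$ and $\{\gamma_i\}$ with each $\gamma_i$ a rank-$n$ (hence isolated) point of ${\uparrow}_{\preccurlyeq}\alpha$ lying outside $U(\alpha)\cup{\uparrow}_{\preccurlyeq}\alpha_1\cup\cdots\cup{\uparrow}_{\preccurlyeq}\alpha_i$, and then uses Lemma~\ref{lemma-2.1} to see that $\left\{\{\gamma_i\}\colon i\in\mathbb{N}\right\}$ is an infinite locally finite family of non-empty open sets in the clopen, feebly compact cone ${\uparrow}_{\preccurlyeq}\alpha$ (clopen by Lemma~\ref{lemma-2.2}, feebly compact by Theorem~14 of \cite{Bagley-Connell-McKnight-Jr-1958}), a contradiction. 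You instead import the H-closedness of ${\uparrow}_{\preccurlyeq}\alpha$ from the inductive step of $(ii)\Rightarrow(iv)$ in Theorem~\ref{theorem-2.9} and apply the Alexandrov--Urysohn closure-cover characterization of H-closed Hausdorff spaces to the cover by $U(\alpha)\cap{\uparrow}_{\preccurlyeq}\alpha$ and the cones ${\uparrow}_{\preccurlyeq}\beta$; the punchline that isolated rank-$n$ points in $\operatorname{cl}\bigl(U(\alpha)\cap{\uparrow}_{\preccurlyeq}\alpha\bigr)$ must already lie in $U(\alpha)$ is exactly right, and there is no circularity since Lemma~\ref{lemma-2.10} is invoked only in Theorem~\ref{theorem-2.11}, after Theorem~\ref{theorem-2.9} is fully proved. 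What your route buys is brevity and conceptual clarity --- a one-shot application of a classical theorem; what it costs is reliance on the heaviest part of Theorem~\ref{theorem-2.9} and on a characterization of H-closedness that the paper never quotes, whereas the paper's proof is self-contained at the level of feeble compactness and Lemmas~\ref{lemma-2.1}--\ref{lemma-2.2}. One small point worth making explicit if you write this up: Hausdorffness of ${\uparrow}_{\preccurlyeq}\alpha$ (needed for the Alexandrov--Urysohn criterion) follows from the functional Hausdorffness established in Lemma~\ref{lemma-2.2}.
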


\begin{proof} 
Suppose to the contrary that there exists a feebly compact shift-continuous $T_1$-topology on the semigroup $\mathscr{I}_\lambda^n$ which satisfies the following property: some element $\alpha$ of the semigroup $\mathscr{I}_\lambda^n$ has an open neighbourhood $U(\alpha)$ of $\alpha$ in $\left(\mathscr{I}_\lambda^n,\tau\right)$ such that
\begin{equation*}
  \mathscr{I}_\lambda^n\setminus\mathscr{I}_\lambda^{n-1}\cap {\uparrow}_{\preccurlyeq}\alpha\nsubseteq U(\alpha)\cup{\uparrow}_{\preccurlyeq}\alpha_1\cup \cdots\cup {\uparrow}_{\preccurlyeq}\alpha_k,
\end{equation*}
for any finitely many $\alpha_1,\ldots,\alpha_k\in{\uparrow}_{\preccurlyeq}\alpha\setminus\{\alpha\}$. We observe that Lemma~\ref{lemma-2.2} implies that without loss of generality we may assume that $U(\alpha)\subseteq{\uparrow}_{\preccurlyeq}\alpha$.

Fix such an element $\alpha$ of $\mathscr{I}_\lambda^n$ and its open neighbourhood $U(\alpha)$ with the above determined property. Then our assumption implies that there exists $\alpha_1\in{\uparrow}_{\preccurlyeq}\alpha\setminus U(\alpha)$ such that the set
\begin{equation*}
  \mathscr{I}_\lambda^n\setminus\mathscr{I}_\lambda^{n-1}\cap{\uparrow}_{\preccurlyeq}\alpha_1\setminus U(\alpha)
\end{equation*}
is infinite and fix an arbitrary $\gamma_1\in\mathscr{I}_\lambda^n\setminus\mathscr{I}_\lambda^{n-1}\cap{\uparrow}_{\preccurlyeq}\alpha_1\setminus U(\alpha)$. Next, by induction using our assumption we construct sequences $\left\{\alpha_i\right\}_{i\in\mathbb{N}}$ and $\left\{\gamma_i\right\}_{i\in\mathbb{N}}$ of the distinct points of the set ${\uparrow}_{\preccurlyeq}\alpha$ such that the following conditions hold:
\begin{itemize}
  \item[$(a)$] $\alpha_{i+1}\in{\uparrow}_{\preccurlyeq}\alpha\setminus \left(U(\alpha)\cup{\uparrow}_{\preccurlyeq}\alpha_1\cup\cdots\cup {\uparrow}_{\preccurlyeq}\alpha_i\right)$ and the set
\begin{equation*}
  \mathscr{I}_\lambda^n\setminus\mathscr{I}_\lambda^{n-1}\cap{\uparrow}_{\preccurlyeq}\alpha \cap{\uparrow}_{\preccurlyeq}\alpha_{1+1}\setminus \left(U(\alpha)\cup{\uparrow}_{\preccurlyeq}\alpha_1\cup\cdots\cup {\uparrow}_{\preccurlyeq}\alpha_i\right)
\end{equation*}
is infinite;
  \item[$(b)$] $\gamma_{i+1}\in\mathscr{I}_\lambda^n\setminus\mathscr{I}_\lambda^{n-1}\cap {\uparrow}_{\preccurlyeq}\alpha \cap{\uparrow}_{\preccurlyeq}\alpha_1\setminus \left(U(\alpha)\cup{\uparrow}_{\preccurlyeq}\alpha_1\cup\cdots\cup {\uparrow}_{\preccurlyeq}\alpha_i\right)$,
\end{itemize}
for all positive integers $i$.

By Lemma~\ref{lemma-2.2} and Theorem~14 of \cite{Bagley-Connell-McKnight-Jr-1958} the space ${\uparrow}_{\preccurlyeq}\alpha$ is feebly compact. Then Lemma~\ref{lemma-2.1} implies that the infinite family of non-empty open subsets $\mathscr{U}=\left\{\left\{\gamma_i\right\}\colon i\in\mathbb{N}\right\}$ is locally finite, which contradicts the feeble compactness of ${\uparrow}_{\preccurlyeq}\alpha$. The obtained contradiction implies the statement of the lemma.
\end{proof}

\begin{theorem}\label{theorem-2.11}
Let $n$ be an arbitrary positive integer and $\lambda$ be an arbitrary infinite cardinal. Then every shift-continuous semiregular feebly compact $T_1$-topology $\tau$ on $\mathscr{I}_\lambda^n$ is compact.
\end{theorem}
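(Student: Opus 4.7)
The plan is to identify $\tau$ with the unique compact shift-continuous $T_1$-topology $\tau_{\operatorname{\textsf{c}}}$ of Example~\ref{example-2.4}; by Theorem~\ref{theorem-2.5} this forces $\tau$ to be compact. The inclusion $\tau_{\operatorname{\textsf{c}}}\subseteq\tau$ is immediate from Lemma~\ref{lemma-2.2}: each set ${\uparrow}_{\preccurlyeq}\beta$ is $\tau$-clopen, so every basic open set
\[
U_\alpha(\alpha_1,\ldots,\alpha_k)={\uparrow}_{\preccurlyeq}\alpha\setminus({\uparrow}_{\preccurlyeq}\alpha_1\cup\cdots\cup{\uparrow}_{\preccurlyeq}\alpha_k)
\]
of $\tau_{\operatorname{\textsf{c}}}$ belongs to $\tau$. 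The substantive task is the reverse inclusion $\tau\subseteq\tau_{\operatorname{\textsf{c}}}$.

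To that end, fix $\alpha\in\mathscr{I}_\lambda^n$ and a $\tau$-open $U\ni\alpha$. First I would use semiregularity to choose a regular open set $V$ with $\alpha\in V\subseteq U$. Lemma~\ref{lemma-2.10} applied to $V$ then produces $\alpha_1,\ldots,\alpha_k\in{\uparrow}_{\preccurlyeq}\alpha\setminus\{\alpha\}$ such that
\[
(\mathscr{I}_\lambda^n\setminus\mathscr{I}_\lambda^{n-1})\cap U_\alpha(\alpha_1,\ldots,\alpha_k)\subseteq V.
\]
Proposition~\ref{proposition-2.8}, applicable because $\tau$ is feebly compact (hence $d$-feebly compact), asserts that $\mathscr{I}_\lambda^n\setminus\mathscr{I}_\lambda^{n-1}$ is dense in $(\mathscr{I}_\lambda^n,\tau)$, so its intersection with the $\tau$-open set $U_\alpha(\alpha_1,\ldots,\alpha_k)$ is dense in $U_\alpha(\alpha_1,\ldots,\alpha_k)$. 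Consequently $U_\alpha(\alpha_1,\ldots,\alpha_k)\subseteq\operatorname{cl}_\tau(V)$, and since $U_\alpha(\alpha_1,\ldots,\alpha_k)$ is $\tau$-open and $V$ is regular open we upgrade this to
\[
U_\alpha(\alpha_1,\ldots,\alpha_k)\subseteq\operatorname{int}_\tau(\operatorname{cl}_\tau(V))=V\subseteq U.
\]
Hence $\alpha$ possesses a $\tau_{\operatorname{\textsf{c}}}$-basic neighbourhood lying inside $U$, establishing $\tau\subseteq\tau_{\operatorname{\textsf{c}}}$.

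The crucial (and only non-routine) use of semiregularity occurs in the last display: without it we could deduce only that $U_\alpha(\alpha_1,\ldots,\alpha_k)$ lies in $\operatorname{cl}_\tau(V)$, not in $V$ itself. This is exactly the step whose failure is exhibited by the Hausdorff non-compact feebly compact topology on $\mathscr{I}_\lambda^2$ constructed in Example~\ref{example-2.6}; every other part of the argument relies only on feeble compactness, shift-continuity and the $T_1$ axiom. Combining the two inclusions yields $\tau=\tau_{\operatorname{\textsf{c}}}$, and Example~\ref{example-2.4} then delivers compactness.
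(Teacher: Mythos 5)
Your argument is correct, and it takes a genuinely different route from the paper's. You establish the single global inclusion $\tau\subseteq\tau_{\operatorname{\textsf{c}}}$ directly: semiregularity supplies a regular open $V$ with $\alpha\in V\subseteq U$, Lemma~\ref{lemma-2.10} applied to $V$ supplies $\alpha_1,\ldots,\alpha_k\in{\uparrow}_{\preccurlyeq}\alpha\setminus\{\alpha\}$ with $\left(\mathscr{I}_\lambda^n\setminus\mathscr{I}_\lambda^{n-1}\right)\cap U_\alpha(\alpha_1,\ldots,\alpha_k)\subseteq V$, and the density of $\mathscr{I}_\lambda^n\setminus\mathscr{I}_\lambda^{n-1}$ from Proposition~\ref{proposition-2.8} together with regular openness of $V$ upgrades this to $U_\alpha(\alpha_1,\ldots,\alpha_k)\subseteq\operatorname{int}_\tau(\operatorname{cl}_\tau(V))=V\subseteq U$; each of these steps checks out (the degenerate case $\operatorname{rank}\alpha=n$ is harmless, since ${\uparrow}_{\preccurlyeq}\alpha=\{\alpha\}$ is $\tau_{\operatorname{\textsf{c}}}$-open by Lemma~\ref{lemma-2.2} applied to $\tau_{\operatorname{\textsf{c}}}$). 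The paper instead proceeds by a downward induction on rank, proving that every set ${\uparrow}_{\preccurlyeq}\alpha$ is compact: assuming some ${\uparrow}_{\preccurlyeq}\beta$ is not compact, it invokes the Alexander Subbase Theorem to extract a bad cover, constructs from it an infinite closed discrete subset $X\subseteq{\uparrow}_{\preccurlyeq}\beta$, takes a regular open neighbourhood $U(\beta)$ disjoint from $X$, and then uses Lemma~\ref{lemma-2.10} together with Lemma~\ref{lemma-2.1} to show that points of $X$ lie in $\operatorname{cl}_\tau(U(\beta))$, contradicting regular openness. So both arguments run on the same engine --- density of the top-rank elements, Lemma~\ref{lemma-2.10}, and the regular-open neighbourhood trick --- but your global comparison with $\tau_{\operatorname{\textsf{c}}}$ dispenses with the induction and the Alexander Subbase Theorem, and it delivers the identification $\tau=\tau_{\operatorname{\textsf{c}}}$ at once (the paper recovers this only a posteriori via Theorem~\ref{theorem-2.5}); what the paper's longer route buys is the intermediate structural fact that each upper set ${\uparrow}_{\preccurlyeq}\alpha$ is compact, established rank by rank. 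Note also that for compactness alone the inclusion $\tau\subseteq\tau_{\operatorname{\textsf{c}}}$ already suffices, since a topology coarser than a compact one is compact; your reverse inclusion $\tau_{\operatorname{\textsf{c}}}\subseteq\tau$ from Lemma~\ref{lemma-2.2} is only needed for the equality itself.
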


\begin{proof} 
We shall prove the statement of the theorem by induction. First we observe that for every element $\alpha$ of a semiregular feebly compact $T_1$-semitopological semigroup $\left(\mathscr{I}_\lambda^n,\tau\right)$ with $\operatorname{rank}\alpha=n-1, n$ the set ${\uparrow}_{\preccurlyeq}\alpha$ is compact. Indeed, by Lemma~\ref{lemma-2.2} for every $\beta\in \mathscr{I}_\lambda^n$ the set ${\uparrow}_{\preccurlyeq}\beta$ is open-and-closed in $\left(\mathscr{I}_\lambda^n,\tau\right)$, and hence we have that $\mathscr{I}_\lambda^n\setminus\mathscr{I}_\lambda^{n-1}$ is an open discrete subspace of $\left(\mathscr{I}_\lambda^n,\tau\right)$ and using Theorem~14 of \cite{Bagley-Connell-McKnight-Jr-1958} we obtain that ${\uparrow}_{\preccurlyeq}\alpha$ is feebly compact, which implies that the space ${\uparrow}_{\preccurlyeq}\alpha$ is compact.

Next we shall prove a more stronger step of induction: \emph{if for every element $\alpha$ of a semiregular feebly compact $T_1$-semitopological semigroup $\left(\mathscr{I}_\lambda^n,\tau\right)$ with $\operatorname{rank}\alpha>l\leqslant n$ the set ${\uparrow}_{\preccurlyeq}\alpha$ is compact, then ${\uparrow}_{\preccurlyeq}\beta$ is compact for every $\beta\in\mathscr{I}_\lambda^n$ with $\operatorname{rank}\alpha=l$.}

Suppose to the contrary that there exists a semiregular feebly compact $T_1$-semitopological semigroup $\left(\mathscr{I}_\lambda^n,\tau\right)$ such that for some positive integer $l\leqslant n$ the set ${\uparrow}_{\preccurlyeq}\alpha$ is compact for every $\alpha\in\mathscr{I}_\lambda^n$ with $\operatorname{rank}\alpha>l$, but there exists $\beta\in\mathscr{I}_\lambda^n$ with $\operatorname{rank}\beta=l$ such that the set  ${\uparrow}_{\preccurlyeq}\beta$ is not compact.

First we observe that our assumption that the set ${\uparrow}_{\preccurlyeq}\alpha$ is compact and  Corolla\-ry~3.1.14 of \cite{Engelking-1989} imply that  the following family
\begin{equation*}
    \mathscr{B}_{\operatorname{\textsf{c}}}(\alpha)=\left\{U_\alpha(\alpha_1,\ldots,\alpha_k)=
    {\uparrow}_{\preccurlyeq}\alpha\setminus({\uparrow}_{\preccurlyeq}\alpha_1\cup\cdots\cup
    {\uparrow}_{\preccurlyeq}\alpha_k)\colon \alpha_i\in{\uparrow}_{\preccurlyeq}\alpha\setminus
    \{\alpha\},  i=1,\ldots, k\right\}
\end{equation*}
is a base of topology at the point $\alpha$ of $\left(\mathscr{I}_\lambda^n,\tau\right)$ for every $\alpha\in\mathscr{I}_\lambda^n$ with $\operatorname{rank}\alpha>l$.

Then the Alexander Subbase Theorem (see \cite[Theorem~1]{Alexander-1939} or \cite[p.~221, 3.12.2(a)]{Engelking-1989}) and Lemma~\ref{lemma-2.2} imply that there exists a base $\mathscr{B}$ of the topology $\tau$ on $\mathscr{I}_\lambda^n$ with the following properties:
\begin{itemize}
  \item[$(i)$] $\mathscr{B}=\bigcup\left\{\mathscr{B}(\gamma)\colon\gamma\in\mathscr{I}_\lambda^n \right\}$ and for every $\gamma\in\mathscr{I}_\lambda^n$ the family $\mathscr{B}(\gamma)$ is a base at the point $\gamma$;
  \item[$(ii)$] $U(\gamma)\subseteq {\uparrow}_{\preccurlyeq} \gamma$ for any $U(\gamma)\in\mathscr{B}(\gamma)$;
  \item[$(iii)$] $\mathscr{B}(\gamma)=\mathscr{B}_{\operatorname{\textsf{c}}}(\gamma)$ for every $\gamma\in\mathscr{I}_\lambda^n$ with $\operatorname{rank}\gamma>l$;
  \item[$(iv)$] there exists a cover $\mathscr{U}$ of the set  ${\uparrow}_{\preccurlyeq}\beta$ by members of the base $\mathscr{B}$ which has not a finite subcover.
\end{itemize}

We claim that the subspace ${\uparrow}_{\preccurlyeq}\beta$ of $\left(\mathscr{I}_\lambda^n,\tau\right)$ contains an infinite closed discrete subspace $X$. Indeed, let $\mathscr{U}_0$ be a subfamily of $\mathscr{U}$ such that
\begin{equation*}
\{\beta\}\cup \left({\uparrow}_{\preccurlyeq}\beta\cap\mathscr{I}_\lambda^n\setminus\mathscr{I}_\lambda^{n-1}\right)\subseteq \bigcup\mathscr{U}_0.
\end{equation*}
Since the set  ${\uparrow}_{\preccurlyeq}\beta$ is not compact and ${\uparrow}_{\preccurlyeq} \gamma$ is compact for any $\gamma\in {\uparrow}_{\preccurlyeq}\beta\setminus\{\beta\}$, without loss of generality we may assume that there exists $k>\operatorname{rank}\beta$ such that the following conditions hold:
\begin{itemize}
  \item[$(a)$] there exist infinitely many elements $\zeta\in{\uparrow}_{\preccurlyeq}\beta$ with $\operatorname{rank}\zeta=k$ such that $\zeta\notin \bigcup\mathscr{U}_0$;
  \item[$(b)$] $\varsigma\in\bigcup\mathscr{U}_0$ for all $\varsigma\in{\uparrow}_{\preccurlyeq}\beta$ with $\operatorname{rank}\varsigma<k$.
\end{itemize}
It is obvious that the set
\begin{equation*}
X={\uparrow}_{\preccurlyeq}\beta\setminus\left(\bigcup\mathscr{U}_0\setminus \bigcup\left\{{\uparrow}_{\preccurlyeq}\varsigma\colon \operatorname{rank}\varsigma>k\right\}\right)
\end{equation*}
is requested.

Fix an arbitrary regular open neighbourhood $U(\beta)$ of the point $\beta$ in $\left(\mathscr{I}_\lambda^n,\tau\right)$ such that $U(\beta)\cap X=\varnothing$. By Lemma~\ref{lemma-2.10} there exist finitely many $\beta_1,\ldots,\beta_s\in{\uparrow}_{\preccurlyeq}\beta$ such that
\begin{equation*}
  \mathscr{I}_\lambda^n\setminus\mathscr{I}_\lambda^{n-1}\cap {\uparrow}_{\preccurlyeq}\beta \subseteq U(\beta)\cup{\uparrow}_{\preccurlyeq}\beta_1\cup \cdots\cup {\uparrow}_{\preccurlyeq}\beta_s.
\end{equation*}
It is obvious that the set $X\setminus({\uparrow}_{\preccurlyeq}\beta_1\cup \cdots\cup {\uparrow}_{\preccurlyeq}\beta_s)$ is infinite. For every $\delta\in X$ the set ${\uparrow}_{\preccurlyeq}\delta$ is compact and open, and moreover by Lemma~\ref{lemma-2.1} the set ${\uparrow}_{\preccurlyeq}\delta\setminus\left({\uparrow}_{\preccurlyeq}\beta_1\cup \cdots\cup {\uparrow}_{\preccurlyeq}\beta_s\right)$ contains infinitely many points of the neighbourhood $U(\beta)$. This implies that $\operatorname{int}_{\mathscr{I}_\lambda^n}\left(\operatorname{cl}_{\mathscr{I}_\lambda^n}(U(\beta))\right)\cap X\neq\varnothing$, which contradicts the assumption that $U(0)\cap X=\varnothing$. The obtained contradiction implies that the subspace ${\uparrow}_{\preccurlyeq}\beta$ of $\left(\mathscr{I}_\lambda^n,\tau\right)$ is compact, which completes the proof of the theorem.
\end{proof}




\medskip
\paragraph*{Acknowledgements}
The author acknowledges Taras Banakh and the referee for useful important comments and suggestions.

\end{document}